\theoremstyle{definition}
\def\fnum{equation} 
\newtheorem{Thm}[\fnum]{Theorem}
\newtheorem{Cor}[\fnum]{Corollary}
\newtheorem{Lem}[\fnum]{Lemma}
\newtheorem{Pro}[\fnum]{Proposition}
\numberwithin{equation}{section}
\newcommand{\nn}{{\bf{n}}}
\newcommand{\Ric}{{\text{Ric}}}
\newcommand{\Hess}{{\text {Hess}}}
\def\RR{{\bold R}}
\newcommand{\dv}{{\text {div}}}
\newcommand{\e}{{\text {e}}}
\newcommand{\cC}{{\mathcal{C}}}
\newcommand{\cI}{{\mathcal{I}}}
\newcommand{\cL}{{\mathcal{L}}}
\newcommand{\cH}{{\mathcal{H}}}
\newcommand{\cP}{{\mathcal{P}}}
\newcommand{\cR}{{\mathcal{R}}}
\newcommand{\eqr}[1]{(\ref{#1})}
\title{Optimal growth bounds for eigenfunctions}
\author{Tobias Holck Colding}%
\address{MIT, Dept. of Math.\\
77 Massachusetts Avenue, Cambridge, MA 02139-4307.}
\author{William P. Minicozzi II}%
\thanks{The  authors
were partially supported by NSF  DMS Grants 1812142, 2104349,  1707270 and 2005345.}
\email{colding@math.mit.edu and minicozz@math.mit.edu}
\begin{document}

\maketitle

\begin{abstract}
Analysis of non-compact manifolds almost always requires some controlled behavior at infinity.     Without such, one neither can show, nor expect, strong properties.   On the other hand, such assumptions restrict the possible applications and often too severely.    

In a wide range of areas non-compact spaces come with a Gaussian weight and a drift Laplacian.  Eigenfunctions are $L^2$ in the weighted space allowing for extremely rapid growth.  Rapid growth would be disastrous for many applications.    
Surprisingly, for very general tensors, manifolds and weights, we show the same polynomial growth bounds that Laplace and Hermite observed for functions on Euclidean space for the standard Gaussian.  This covers all shrinkers for Ricci and mean curvature flows.    

These results open a door for understanding general non-compact spaces.  It provides an analytic framework for doing nonlinear PDE on Gaussian
spaces where previously the Gaussian weight allowed wild growth that
made it impossible to approximate nonlinear by linear.     It is key to bound the growth of diffeomorphisms of non-compact manifolds and is the key for solving the ``gauge problem''.  The relative nature of the estimates and the slow growth in the bounds lead  to ``propagation of almost splitting'' that is significantly stronger than pseudo locality and key for applications.   
 \end{abstract}
 
% \tableofcontents
 
 \section{Introduction}
 Laplace discovered that on the line eigenfunctions of $\cL\,u=u''-\frac{x}{2}\,u'$ in the Gaussian $L^2$ space are polynomials whose degree is exactly twice the eigenvalue.   These polynomials were later rediscovered twice.  First by Chebyshev and a few years later by Hermite.   They are now known as the Hermite polynomials and the eigenvalue equation as the Hermite equation.  They play an important role in diverse fields.  Here we prove a vast generalization of these results that has many applications.  
  
 We will prove optimal polynomial growth bounds for eigenfunctions of drift Laplacians in a general setting that includes
 all shrinking solitons for both Ricci and mean curvature flows (or MCF).  These bounds are sharp for the Ornstein-Uhlenbeck
 operator on Euclidean space.    
 Drift Laplacians are ubiquitous in many areas, including quantum field theory, stochastic PDE and anywhere the heat equation or Gaussian appear such as functional inequalities, parabolic PDEs, geometric flows, and probability.
 The drift term arises whenever there is a natural scaling or, more generally, a gradient flow.   
 
 The growth estimates that we prove hold
   in remarkable generality and without any assumptions on asymptotic decay.   This is surprising and in contrast to most other situations, like unique continuation, that require very strong geometric assumptions on the space.   A typical starting point for growth estimates is a Pohozaev identity or commutator estimate that come from a dilation, or approximate dilation, structure.   We have none of these here in this general setting.   In contrast, we rely on a miraculous cancellation for just the right quantity (see Section \ref{s:3}).
   A consequence of the generality is that the growth estimates hold  for all singularities which is key for applications.

 Two of the applications in \cite{CM2} are to solve the ``gauge problem'' and to show ``propagation of almost splitting''.  The ``gauge problem'' is a nonlinear problem that we solve using approximation  by a linear PDE and an iteration scheme.  This requires very strong growth estimates for the linear equation which the results here give.  A second application is to show propagation of almost splitting.  
The relative nature of the estimates and the slow growth in the bounds lead  to propagation of almost splitting that is significantly stronger than pseudo locality and will be key for applications.  One of the central tools for flows is pseudo-locality, \cite{Br}, \cite{P}, which roughly says that flatness propagates forward in time; accordingly, flatness propagates outward in space for shrinking solitons.  This has two important limitations: It gives little control over the metric itself because of the ``gauge group'' of diffeomorphisms and, second, there is a definite loss in the estimates that makes it impossible to iterate.
   In contrast, the growth bounds here
 will  show that metric control  itself propagates outwards, giving vastly more control.   Both the relative nature of the bounds and the metric control  play a key role.

Another application of our results is that polynomially growing ``special functions''  are dense in $L^2$. This  gives   manifold versions of some very classical problems in analysis.
Whereas Weierstrass's approximation theorem shows  that polynomials are dense among continuous functions on any compact interval, the classical Bernstein problem, \cite{Lu}, dating back to 1924, asks if polynomials are dense on $\RR$ in the weighted $L^p(\e^{-f}\,dx)$ space if $f$ is assumed to grow sufficiently fast at infinity.    On the line, the Hermite polynomials are dense in $L^2(\e^{-\frac{|x|^2}{4}}\,dx)$ and Lennart Carleson (and implicitly Izumi-Kawata) showed that polynomials are dense in $L^p(\e^{-|x|^{\alpha}}\,dx)$ if and only if $\alpha\geq 1$.    A similar problem in several complex variables is the {\it{completeness problem}}, going back to Carleman in 1923, about density of polynomials in weighted $L^2$ spaces of holomorphic functions; \cite{BFW}.

\subsection{Results} 
   In many settings one has an $n$-dimensional Riemannian manifold $(M,g)$, that could even be flat Euclidean space,  with two nonnegative functions $f$ and $S$ that satisfy
\begin{align}
\Delta\,f+S&=\frac{n}{2}\, ,\label{e:aronson1}\\
|\nabla f|^2+S&=f\, ,\label{e:aronson2}
\end{align}
and where $f$ is proper and $C^n$.    The weight $\e^{-f}$ gives a drift Laplacian $\cL$ on tensors $u$
\begin{align}
	\cL \, u = \e^f \, \dv \, \left( \e^{-f} \, \nabla u \right) =\Delta\, u -  \nabla_{\nabla f}\,  u 
\end{align}
that is self-adjoint with respect to the  $L^2$ norm $\| u \|_{L^2}^2 = \int |u|^2 \, \e^{-f}$.   
Since $|\nabla \sqrt{f}| \leq \frac{1}{2}$ by  \eqr{e:aronson2}, $b = 2\,\sqrt{f}$
satisfies  $|\nabla b|\leq 1$ as in \cite{CaZh}, cf. \cite{CxZh1}.
On $\RR^n$, $f=\frac{|x|^2}{4}$ and $S=0$ satisfy \eqr{e:aronson1}, \eqr{e:aronson2} with   $\cL=\Delta -\frac{1}{2}\,\nabla_x$  the Ornstein-Uhlenbeck operator and $b = |x|$.
   In Ricci flow, singularities are gradient shrinking solitons, $f$ is the potential  and $S$ is scalar curvature\footnote{See \cite{H}, \cite{Cn},  \cite{B}, \cite{Ca}, \cite{ChL},  \cite{ChLN}, \cite{CRF},  \cite{KL}, \cite{P}, \cite{T}}.  In MCF, singularities are shrinkers $\Sigma\subset \RR^N$, $f=\frac{|x|^2}{4}$ and $S=|{{\bf{H}}}|^2$, where ${\bf{H}}$ is the mean curvature vector\footnote{See, e.g., \cite{Hu}, \cite{CM1}, \cite{CM3}}.

 Throughout, $\lambda > 0$ is a constant and $u$ is a tensor on $M$.  We will often assume that
 \begin{align}
 	\langle \cL \, u  , u \rangle  \geq  - \lambda \, |u|^2 \, ;    \label{e:herelambda}
\end{align}
 this includes eigentensors with $\cL \, u = - \lambda \, u$.  
 To understand the growth of $u$,  we will study a weighted average of $|u|^2$ on level sets of  $b$
  \begin{align}
 	I(r) &= r^{1-n} \, \int_{b = r} |u|^2 \, |\nabla b| \, . \label{e:Iofr1} 
\end{align}
This is  defined at regular values of $b$, but   extends continuously to all values to be differentiable a.e.~and absolutely continuous.  The  weight $|\nabla b|$ will play a crucial role (cf. \cite{CM5}--\cite{CM7}, \cite{C}, \cite{AFM}, \cite{BS}, \cite{AMO}, \cite{GV}).   The  growth of $I$   will be bounded above in terms of  the solid integral
 \begin{align}
 	D(r) &= r^{2-n} \, \e^{ \frac{r^2}{4}} \, \int_{b < r} \left( |\nabla u|^2 + \langle \cL \, u , u \rangle	\right)  \, \e^{-f} \, .
	\label{e:integralformD}
 \end{align}
 The frequency $U = \frac{D}{I}$ is defined when $I$ is positive and will measure the    growth of $\log I$.

 The next theorem shows that an $L^2$ tensor satisfying \eqr{e:herelambda}
  has frequency bounded by $2\, \lambda$ and, accordingly, it grows at most polynomially at this rate.  This may seem surprising since the weight $\e^{-f}$ decays rapidly, so the $L^2$ condition a priori allows extremely rapid growth.   
 
 \begin{Thm}   \label{t:main}
 Suppose $u , \cL \, u \in L^2$, \eqr{e:aronson1}, \eqr{e:aronson2}, \eqr{e:herelambda} hold, and $u$ does not vanish identically outside a compact set.
 Given $\epsilon > 0$, there exists $R=R(n,\lambda,\epsilon)$ so if $r>R$, then
\begin{align}
U(r)\leq 2\,\lambda\,\left(1+\frac{4\,\lambda+2\,n-4+\epsilon}{r^2}\right)\, 	\label{e:0p9}
\end{align}
 and for all $r_2>r_1>R$ and $c= 2\lambda \,\left(4\,\lambda+2\,n-4+\epsilon\right)$
 \begin{align}
 I(r_2)\leq I(r_1)\,\left(\frac{r_2}{r_1}\right)^{4\,\lambda} \, \e^{c\, \left( r_1^{-2} -r^{-2}_2 \right)}\, .	\label{e:IU08}
 \end{align}
 \end{Thm}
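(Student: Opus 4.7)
My approach is the frequency function method, adapted to the Gaussian-weighted setting: derive a differential inequality for $U(r) = D(r)/I(r)$ that forces $U(r)\to 2\lambda$ at rate $O(r^{-2})$, then integrate $(\log I)'$. The first step is to reduce $D(r)$ and $I'(r)$ to boundary integrals on $\{b=r\}$. Applying the divergence theorem to $\dv(e^{-f}\nabla |u|^2/2)$ over $\{b<r\}$ --- using self-adjointness of $\cL$ and the fact that $e^{-f}=e^{-r^2/4}$ is constant on the level set --- gives $D(r) = \frac{r^{2-n}}{2}\int_{b=r}\langle\nabla|u|^2,\nabla b\rangle/|\nabla b|\,d\sigma$. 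Writing $r^{n-1}I(r)=\int_{b<r}\dv(|u|^2\nabla b)$ and differentiating via coarea produces $\frac{d}{dr}(r^{n-1}I(r)) = \int_{b=r}|u|^2\Delta b/|\nabla b|\,d\sigma + 2r^{n-2}D(r)$.

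The crucial ingredient is the ``miraculous cancellation'' of Section \ref{s:3}. On $\{b=r\}$, \eqref{e:aronson1} and \eqref{e:aronson2} force $|\nabla b|^2 = 1-4S/r^2$ and $b\Delta b = n-1-2S+4S/r^2$; substituting into the previous formulas, the $S$-contributions arrange into the clean identity
\begin{equation*}
r\,\frac{I'(r)}{I(r)} \;=\; 2U(r) \;-\; \frac{2(1-2n/r^2)}{r^{n-1}I(r)}\int_{b=r}\frac{|u|^2 S}{|\nabla b|}\,d\sigma,
\end{equation*}
whose correction is nonnegative for $r>\sqrt{2n}$ since $S\ge 0$. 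Hence $r(\log I)'(r)\le 2U(r)$. To bound $U$ I would next differentiate $D$ in $r$: the prefactor $r^{2-n}e^{r^2/4}$ yields $D\cdot[(2-n)/r+r/2]$, and the moving-boundary term contributes $r^{2-n}\int_{b=r}(|\nabla u|^2+\langle u,\cL u\rangle)/|\nabla b|\,d\sigma$. Splitting $|\nabla u|^2 = |\nabla_\nu u|^2 + |\nabla^T u|^2$ on the level set, Cauchy-Schwarz between the boundary representation of $D(r)$ and $\int_{b=r}|u|^2|\nabla b|$, $\int_{b=r}|\nabla_\nu u|^2/|\nabla b|$, combined with \eqref{e:herelambda} to handle $\langle u,\cL u\rangle$, should produce a differential inequality of the schematic form
\begin{equation*}
r\,(U-2\lambda)'(r) + 2U(r)\bigl(U(r)-2\lambda\bigr) \;\le\; \frac{C(n,\lambda,\epsilon)}{r^2},
\end{equation*}
where a second cancellation annihilates the would-be $O(1)$ and $O(1/r)$ errors.

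Treating this as an asymptotically linear ODE in $V=U-2\lambda$, and using the $L^2$-hypothesis $u,\cL u\in L^2(e^{-f})$ to exclude a runaway in $U$ (via coarea, excessive polynomial growth of $I$ is inconsistent with the combined $L^2$ bounds on $u$ and $\cL u$), one obtains \eqref{e:0p9} with the sharp constant $c=2\lambda(4\lambda+2n-4+\epsilon)$. Integrating $r(\log I)'(r)\le 2U(r)$ from $r_1$ to $r_2$ then gives $\log(I(r_2)/I(r_1))\le 4\lambda\log(r_2/r_1)+c(r_1^{-2}-r_2^{-2})$, which exponentiates to \eqref{e:IU08}. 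The main obstacle is the miraculous cancellation together with the extraction of the sharp constant $4\lambda+2n-4+\epsilon$: both require a delicate conspiracy between the $|\nabla b|$-weight in $I$, the $r^{2-n}e^{r^2/4}$ normalization of $D$, the $S$-dependence of $\Delta b$ and $|\nabla b|^2$ from \eqref{e:aronson1}--\eqref{e:aronson2}, and the boundary/interior Cauchy-Schwarz remainders, all balanced so that no $O(1/r)$ or $O(1)$ error survives in the $U$-ODE. A secondary subtlety is showing from $L^2$-integrability alone --- without any asymptotic decay assumption --- that $U$ cannot escape to $\infty$.
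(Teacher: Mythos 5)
Your set-up through the boundary reductions of $D$ and $I'$, the $S$-cancellation via \eqr{e:rho1} and \eqr{e:rho2}, and the resulting monotonicity $r\,(\log I)'\leq 2U$ for $r\geq\sqrt{2n}$ all reproduce Lemmas \ref{l:IBDR} and \ref{l:Idiff}, and integrating that monotonicity against the frequency bound \eqr{e:0p9} does yield \eqr{e:IU08} exactly as you describe. The gap is in the key differential inequality: you propose $r\,(U-2\lambda)'+2U(U-2\lambda)\leq C/r^2$, which has the wrong sign and the wrong coefficient, and cannot be extracted from the ingredients you cite. The hypothesis \eqr{e:herelambda} is one-sided, giving only $\cL\,|u|^2\geq 2|\nabla u|^2-2\lambda|u|^2$, so it produces only a \emph{lower} bound for $D'$; and Cauchy--Schwarz applied to the boundary representation $D=\frac{r^{2-n}}{2}\int_{b=r}\langle\nabla|u|^2,\frac{\nabla b}{|\nabla b|}\rangle$ from \eqr{e:firstclaim} gives $U\,D\leq r^{3-n}\int_{b=r}|\nabla u|^2/|\nabla b|$, a \emph{lower} bound on the boundary gradient term, never an upper one --- nothing controls from above how large $\int_{b=r}|\nabla u|^2/|\nabla b|$ can be in terms of $D$ and $I$ alone. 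What is actually provable (Proposition \ref{p:driftUmont}) is the lower bound $r\,(\log U)'\geq 2-n+\frac{r^2}{2}-U-\frac{\lambda r^2}{U}+(\cdot)$, whose dominant term $\frac{r^2}{2}-\frac{\lambda r^2}{U}=\frac{r^2(U-2\lambda)}{2U}$ carries the coefficient $\frac{r^2}{2U}$, not $2$.

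This sign reversal changes the whole architecture. An upper bound of your form would drive $U$ directly down to $2\lambda+O(r^{-2})$ and make the $L^2$ hypothesis almost decorative; the provable lower bound does the opposite and pushes $U$ \emph{up}. So the proof must be a dichotomy terminated by $L^2$-integrability. One first shows $U\leq 2\lambda+\delta$ for all large $r$: if not, Corollary \ref{c:driftUmont} and Theorem \ref{c:Uquad} force $U\geq\frac{r^2}{2}-r$, and then $K=D-4\lambda I$ satisfies $(\log K)'\geq\frac{3r}{4}$, giving $D(t)\gtrsim\e^{3t^2/8}$ and hence $\int_{b<t}\cL\,|u|^2\,\e^{-f}\to\infty$, contradicting $u,\cL\,u\in L^2$ via Lemma \ref{l:W12}. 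With that weak bound in hand, if $U(r)$ ever lay in $[\,2\lambda(1+\mu/r^2),\ 2\lambda+\delta\,]$ with $\mu>4\lambda+2n-4$, the lower bound would give $r\,(\log U)'\geq c>0$, driving $U$ upward and out of the range at the top, contradicting the weak bound. Your instinct to invoke $L^2$-integrability to exclude runaway is the right one, but it is the terminal step of the dichotomy that the lower bound sets up, not a side condition attached to a decay ODE --- and no decay ODE for $U$ can be established from these estimates.
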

 
 This is sharp for the Ornstein-Uhlenbeck operator on $\RR^n$  where the $L^2$ eigenfunctions are Hermite polynomials with degree twice the eigenvalue.  In fact, the upper bound
  \eqr{e:0p9} is sharp not just in the $2\, \lambda$ in front, but in all the other constants as  well as can be seen from the Hermite polynomials.
The  $R$ in Theorem \ref{t:main} does not depend  on $f$, $M$ or $S$. The  theorem still holds if \eqr{e:aronson1}, \eqr{e:aronson2}, and 
 \eqr{e:herelambda} hold outside of a compact set.  Moreover, it holds with obvious changes when the constant $n$ in \eqr{e:aronson1} is replaced by any other constant.    
 Finally, note that $u$ cannot vanish on an open set if $u$ has unique continuation, e.g. if $\cL \, u = - \lambda \, u$ by \cite{Ar}.

One application will be to  gradient shrinking Ricci solitons.
  A gradient shrinking Ricci soliton $(M,g,f)$ is a Riemannian manifold $(M,g)$ and function $f$ satisfying
$ 	\Ric + \Hess_f = \frac{1}{2} \, g $.   
  The standard drift Bochner formula gives that if $\cL \, v = -\left( \frac{1}{2} + \lambda \right) \, v$, then $\cL \, \nabla v = - \lambda \, \nabla v$ and
  \eqr{e:herelambda} applies to $u = \nabla v$:

\begin{Cor}	\label{c:gsrs}
If $(M,g,f)$ is a gradient shrinking soliton, then \eqr{e:0p9} and \eqr{e:IU08} hold if  $u = \nabla v$ where $v$ is an eigenfunction with eigenvalue $\lambda + \frac{1}{2}$.
\end{Cor}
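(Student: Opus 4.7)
The plan is to reduce everything to Theorem \ref{t:main} applied to the tensor $u=\nabla v$. This requires three ingredients: verifying the Aronson-type equations \eqr{e:aronson1}--\eqr{e:aronson2} on a gradient shrinking Ricci soliton, establishing the commutator identity $\cL\,\nabla v=-\lambda\,\nabla v$, and checking the $L^2$ and non-vanishing hypotheses.

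First I would verify the structural equations. Tracing the soliton identity $\Ric+\Hess_f=\tfrac{1}{2}g$ gives $\Scal+\Delta f=\tfrac{n}{2}$, which is \eqr{e:aronson1} with $S=\Scal\ge 0$ (Chen's result). Taking the divergence of the soliton equation and applying the contracted second Bianchi identity yields the classical Hamilton identity $\Scal+|\nabla f|^2=f$ after the standard normalization of the potential, which is \eqr{e:aronson2}. Thus the hypotheses of Theorem \ref{t:main} on $(M,g,f,S)$ are in force.

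Next comes the key commutator computation. For any function $v$, the standard Bochner identity on one-forms reads $\Delta\,dv=d\,\Delta v+\Ric(\nabla v,\cdot)$, and a direct Leibniz calculation gives $d(\nabla_{\nabla f}v)=\Hess_f(\nabla v,\cdot)+\nabla_{\nabla f}dv$. Combining these,
\begin{align*}
\cL\,dv=\Delta\,dv-\nabla_{\nabla f}dv
=d\,\cL v+\bigl(\Ric+\Hess_f\bigr)(\nabla v,\cdot).
\end{align*}
The soliton equation replaces the last term by $\tfrac{1}{2}dv$, so
\begin{align*}
\cL\,\nabla v=\nabla\,\cL v+\tfrac{1}{2}\,\nabla v.
\end{align*}
Substituting $\cL v=-(\tfrac{1}{2}+\lambda)v$ and applying $\nabla$ collapses the right-hand side to $-\lambda\,\nabla v$, so $u=\nabla v$ satisfies \eqr{e:herelambda} with equality.

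Finally, I would verify the remaining hypotheses of Theorem \ref{t:main}. Integration by parts against the Gaussian weight gives $\int|\nabla v|^2\,\e^{-f}=(\tfrac{1}{2}+\lambda)\int v^2\,\e^{-f}<\infty$, so $u\in L^2$, and since $\cL\,u=-\lambda\,u$ also $\cL\,u\in L^2$. For the non-vanishing condition, if $\nabla v\equiv 0$ on an open set then $v$ is locally constant there; Aronszajn's unique continuation (cited in the excerpt for eigenfunctions of $\cL$) forces $v$ to be constant globally, which contradicts $\cL v=-(\tfrac{1}{2}+\lambda)v$ unless $v\equiv 0$. Thus Theorem \ref{t:main} applies and gives \eqr{e:0p9} and \eqr{e:IU08}.

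The main (and only really non-routine) step is the Bochner-type commutator identity $\cL\,\nabla v=\nabla\,\cL v+(\Ric+\Hess_f)(\nabla v,\cdot)$; once the soliton equation is used to replace the curvature term by $\tfrac{1}{2}g$, the eigenvalue shifts by $\tfrac{1}{2}$ and the corollary is immediate from Theorem \ref{t:main}.
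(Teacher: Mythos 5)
Your proposal is correct and follows the paper's route exactly: the paper's one-sentence justification preceding the corollary --- ``the standard drift Bochner formula gives that if $\cL\,v=-(\tfrac12+\lambda)v$ then $\cL\,\nabla v=-\lambda\,\nabla v$'' --- is precisely the commutator identity you derive, and verifying \eqr{e:aronson1}--\eqr{e:aronson2} by tracing the soliton equation and invoking Hamilton's identity is exactly what the paper's Ricci-flow setup presumes. The $L^2$ and non-vanishing checks you add are also the right finishing touches. One small reordering is needed in the non-vanishing step: Aronszajn's unique continuation propagates \emph{vanishing}, not local constancy, so it does not directly force a locally constant $v$ to be constant globally. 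The correct chain is that $\nabla v\equiv 0$ on an open set makes $v$ locally constant there, the eigenvalue equation $\cL\,v=-(\tfrac12+\lambda)v$ then forces $v\equiv 0$ on that set (since $\cL$ annihilates constants and $\lambda+\tfrac12>0$), and only then does unique continuation give $v\equiv 0$ globally; alternatively, apply unique continuation directly to the tensor equation $\cL\,\nabla v=-\lambda\,\nabla v$, as the paper's remark after Theorem \ref{t:main} suggests.
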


An important application is to vector fields on a shrinking Ricci soliton, where it is used  in \cite{CM2} to show propagation of almost splitting.   
  If a shrinker almost splits on a scale, then it has an eigenvalue close to $\frac{1}{2}$.  
If an eigenfunction has eigenvalue close to $\frac{1}{2}$, then its gradient will have eigenvalue close to $0$.  It will then follow from
Corollary \ref{c:gsrs} that the gradient is nearly constant on a much larger scale than one would expect.
This will be  key in the  propagation of almost splitting  in \cite{CM2}.

 For some applications, it will be useful to consider a more general case  where $u$ satisfies  
\begin{align}
	\langle \cL \, u , u \rangle  \geq - \lambda \, |u|^2 - \psi  \, ,  \label{e:poisson}
\end{align}
where   $\psi$ is a nonnegative function.  
 Define the quantity $J$ by
 \begin{align}    \label{e:J}
 	J(r) = \int_{b < r} b^{2-n} \, \psi  \, .
 \end{align}
  The next theorem gives  polynomial growth in terms of $\lambda$ and $J$.

 \begin{Thm}   \label{t:main2A}
If $u , \cL \, u \in L^2$,  \eqr{e:aronson1}, \eqr{e:aronson2}, \eqr{e:poisson} hold, $\delta\in (0,2)$ and $r_2 > r_1 \geq R(\lambda,n,\delta)$,
then
 \begin{align}
	  I(r_2) \leq \left( \frac{r_2}{r_1} \right)^{ 4 \, \lambda +  \delta} \, \left\{ I(r_1) +  \frac{20 \, \sup J}{4 \, \lambda + \delta} \right\} \, .		\label{e:main2A}
\end{align}
  \end{Thm}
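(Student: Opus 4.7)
The plan is to establish a first-order differential inequality of the form
\begin{align*}
	r \, I'(r) \, \leq \, (4\lambda + \delta) \, I(r) + 20 \, \sup J
\end{align*}
for all $r \geq R(\lambda, n, \delta)$ and then integrate via an integrating factor. This parallels the proof of Theorem \ref{t:main}, where the ODI has no forcing term and a tighter exponent $4\lambda$. Here the $\delta$-slack absorbs the $O(r^{-2})$ corrections coming from the weights, while the inhomogeneous term encodes the $\psi$-forcing in \eqr{e:poisson}.

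I would differentiate $I(r) = r^{1-n} \int_{b = r} |u|^2 \, |\nabla b|$ using the coarea formula and the divergence theorem, obtaining a frequency-type identity relating $r \, I'(r)$ to $D(r)$ after invoking the cancellations from Section \ref{s:3} (which exploit \eqr{e:aronson1} and \eqr{e:aronson2}). Substituting \eqr{e:poisson} in place of the eigentensor relation $\langle \cL u, u \rangle = - \lambda \, |u|^2$ generates an extra interior term, namely $r^{2-n} \, e^{r^2/4} \int_{b < r} \psi \, e^{-f}$. The key point is to combine these weights with the identity $f = b^2 / 4$ (from $b = 2 \sqrt{f}$) and to slice via coarea to bound this contribution by a universal multiple of $\int_{b < r} b^{2-n} \, \psi = J(r) \leq \sup J$. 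Together with the principal term $4\lambda \, I(r)$ and the $O(r^{-2})$ remainder absorbed into $\delta$, this yields the ODI above with numerical constant $20$.

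Once the ODI is in place, multiplying by $r^{-(4\lambda + \delta) - 1}$ gives $\bigl( r^{-(4\lambda + \delta)} \, I(r) \bigr)' \leq 20 \, \sup J \cdot r^{-(4\lambda + \delta) - 1}$, and integrating from $r_1$ to $r_2$ followed by rearrangement yields \eqr{e:main2A}. The main obstacle is the derivation of the ODI itself: one must propagate the forcing $\psi$ through the delicate frequency cancellations of Section \ref{s:3} while ensuring that the weights $r^{2-n} \, e^{r^2/4 - f}$ combine with the coarea decomposition to produce \emph{precisely} the integrand $b^{2-n} \psi$ of $J$, rather than an $r$-dependent quantity. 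Without this exact matching of weights, the forcing would fail to be absorbable by $\sup J$ and the clean additive bound \eqr{e:main2A} would be lost; the fact that it works is again a feature of the cancellations specific to \eqr{e:aronson1}--\eqr{e:aronson2}.
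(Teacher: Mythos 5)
Your final integration step and the target ODI $r\,I'(r)\leq (4\lambda+\delta)\,I(r)+20\sup J$ are exactly what the paper integrates, so the overall skeleton is right. But the way you propose to get the ODI does not work, and this is where essentially all the content of the theorem lives.

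The identity $r\,I'(r)\leq 2\,D(r)$ (Lemma \ref{l:Idiff}) is a straightforward consequence of the divergence theorem and does not involve $\cL\,u$ or $\psi$ at all; it bounds $I'$ in terms of $D$, not in terms of $I$. To obtain the ODI you then need the nontrivial estimate $D(r)\leq (2\lambda+\delta/2)\,I(r)+10\sup J$ for large $r$. You suggest this should come from ``substituting \eqref{e:poisson} in place of the eigentensor relation,'' producing an extra interior term and matching weights. But \eqref{e:poisson} is a \emph{lower} bound on $\langle \cL u,u\rangle$, hence gives a \emph{lower} bound on the solid integral defining $D$ — it cannot on its own give the required \emph{upper} bound on $D$. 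There is also no algebraic identity making $r^{2-n}\,\e^{r^2/4}\int_{b<r}\psi\,\e^{-f}$ a fixed multiple of $J(r)=\int_{b<r}b^{2-n}\psi$: since $f=b^2/4$, the weight is $r^{2-n}\,\e^{(r^2-b^2)/4}$, which is not comparable to $b^{2-n}$ uniformly over $\{b<r\}$. In the paper, $\psi$ in fact enters through the boundary integral $J'(r)=r^{2-n}\int_{b=r}\psi/|\nabla b|$ appearing in the formula for $D'(r)$ (see \eqref{e:J'} and Lemma \ref{l:2G}), not through $D$ itself.

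What the paper actually does is introduce $K(r)=D(r)-(2\lambda+\delta/2)\,I(r)$ and prove $K\leq 10\sup J$ by contradiction using a global argument in the spirit of Theorem \ref{c:Uquad}: if $K(r_0)>10\sup J$ at some large $r_0$, then the differential inequality of Lemma \ref{l:Knu} (together with $K'\geq -J'$ in the relevant range) keeps $K\geq 9\sup J$, hence $U>2\lambda+\delta/2$, which forces $U$ to grow quadratically, which in turn forces $K+J$ to grow like $\e^{2r^2/5}$; this contradicts $u\in W^{1,2}$, $\cL\,u\in L^2$. This blowup-versus-integrability dichotomy is the engine of the proof and is entirely absent from your proposal. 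You would need to supply this argument (or an equivalent substitute) before the ODI can be asserted; the subsequent integration with the integrating factor $r^{-(4\lambda+\delta)}$ is then exactly as in the paper.
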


  \vskip1mm
  Theorem \ref{t:main2A} is used in \cite{CM2} to solve the ``gauge problem'' on a non-compact manifold.  Namely, to solve the ``gauge problem'' we will solve a nonlinear system of PDEs and prove optimal bounds for solutions.  The PDE produces a diffeomorphism  that  fixes an appropriate gauge in the spirit of the slice theorem for group actions.  We then show optimal bounds for the displacement function of the diffeomorphism. 
To do this, we first infinitesimally bound the growth of a diffeomorphism on a Ricci shrinker in \cite{CM2}.  This is done using Theorem \ref{t:main2A} to bound the growth of solutions to a Poisson-type equation $\cP\,Y=V$.  Here $V$ is a known vector field, $Y$ is unknown and $\cP$ a complicated system operator that is the linearization of the nonlinear system of PDEs that fixes the gauge.    Even though the operators $\cP$ and $\cL$ are very different, we will show in \cite{CM2} that one can deduce growth bounds for solutions of the Poisson equation for $\cP$ from growth bounds for the Poisson eigenvalue equation for $\cL$.

\vskip1mm
There is a long history of studying the growth of solutions to differential equations, inequalities, and systems.  At a very rough level, there are two main techniques.  The first, exemplified in the work of Carleman and H\"ormander, is to consider weighted $L^2$ norms with growing weights.  The second, seen for instance in the work of Hadamard and Almgren, is to study the growth of spherical maxima or averages.  The second is an extreme version of the first where the weight is a measure concentrated on a lower dimensional set.
As such, the second method typically gives stronger information and requires  greater structure, such as invariance under dilations.
  Almgren's frequency has  been used to show unique continuation, \cite{GL}, and structure of the nodal sets, \cite{Lo}; prior to this, the main tool in unique continuation was Carleman estimates that still is the primary technique.
Almgren's frequency bounds relied on scaling for $\RR^n$; cf. \cite{CM5}, \cite{CM6}.  The papers \cite{Be} (cf. \cite{W}), \cite{CM4} developed frequencies   for conical and cylindrical MCF shrinkers and did not involve a weight like $|\nabla b|$.
Theorems \ref{t:main}, \ref{t:main2A}, in contrast, hold very generally, including for all shrinkers in both Ricci flow and MCF.
A much weaker version of Theorem \ref{t:main}, that was not relative, was proven in \cite{CM3} in the special case of MCF.  

\section{The level sets of $b$ and the properties of $I$ and $D$}

We will define   $D(r)$ and $I(r)$  as solid integrals over sub-level sets $\{ b < r\}$ of a proper $C^n$ function $b$.
For these functions to be continuous, we must show that level sets of $b$ have measure zero. This is (2) in the next lemma; (1) will be used to prove absolute continuity,  while (3) will be used to show that $I>0$.  Since $b$ is $C^n$, Sard's theorem gives that almost every level set is regular.

\begin{Lem}   \label{l:preparation}
Suppose that $f:M\to \RR$ is a proper function with $\cL\,f=\frac{n}{2}-f$.  Let $\cC$ denote the set of critical points of $f$
and $\cH_r$ the boundary of $\{f>\frac{r^2}{4}\}$.  We get for $r>\sqrt{2\,n}$ that:
\begin{enumerate}
\item The critical set $\cC$ in $\{ f> \frac{n}{2} \}$  is locally contained in  a smooth $(n-1)$-manifold.
\item Each level set $\{ f = c \}$ for $c \geq \frac{n}{2}$ has $\cH^n ( \{ f = c \}) = 0$.
\item The regular set $\cR_r=\cH_r\setminus\cC$ is dense in $\cH_r$.     
\end{enumerate}
\end{Lem}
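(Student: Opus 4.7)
A single observation drives all three parts: at a critical point $p$ of $f$ with $f(p)>\frac{n}{2}$, since $\nabla f(p)=0$ the identity $\cL f=\frac{n}{2}-f$ reduces to $\Delta f(p)=\frac{n}{2}-f(p)<0$, so $\Hess_f(p)$ has strictly negative trace and therefore at least one nonzero eigenvalue.

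For (1), in local coordinates diagonalizing $\Hess_f(p)$ pick $i$ with $\partial_i^2 f(p)\neq 0$. Then the $C^{n-1}$ function $\partial_i f$ vanishes at $p$ and has $\partial_i(\partial_i f)(p)\neq 0$, so the implicit function theorem makes $\{\partial_i f=0\}$ a smooth $(n-1)$-submanifold near $p$, containing $\cC$ locally. For (2) with $c>\frac{n}{2}$, decompose $\{f=c\}$ into its regular part (a smooth $(n-1)$-submanifold locally, by the implicit function theorem) and its critical part (contained in a smooth $(n-1)$-submanifold by (1)); a countable cover of codimension-one pieces has $\cH^n$-measure zero, and the endpoint $c=\frac{n}{2}$ admits a similar treatment.

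For (3), a regular $p\in\cH_r$ is immediate; for critical $p$, argue by contradiction that some neighborhood $V$ of $p$ has $\cR_r\cap V=\emptyset$, so $\cH_r\cap V\subset\cC\cap V\subset N$ with $N$ the $(n-1)$-manifold from (1). Take coordinates near $p$ with $N=\{x^1=0\}$ and $g^{ij}(p)=\delta^{ij}$; on each of the two components of $V\setminus N$, the function $f$ must, by continuity and the absence of further level-set points in $V\setminus N$, be uniformly $>\frac{r^2}{4}$ or uniformly $<\frac{r^2}{4}$. If both components have $f>\frac{r^2}{4}$, then $p$ is a local minimum and $\Hess_f(p)\geq 0$, contradicting $\Delta f(p)=\frac{n}{2}-\frac{r^2}{4}<0$. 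If both have $f<\frac{r^2}{4}$, then $p\notin\overline{\{f>\frac{r^2}{4}\}}$, contradicting $p\in\cH_r$. In the mixed case $f\equiv\frac{r^2}{4}$ on $N\cap V$: if some $q\in N\cap V$ has $\nabla f(q)\neq 0$, then $q\in\cR_r\cap V$, already contradicting $\cR_r\cap V=\emptyset$; otherwise $\nabla f\equiv 0$ on $N\cap V$, and differentiating along $N$ forces every tangential entry of $\Hess_f(p)$ to vanish, so $\partial_1^2 f(p)=\Delta f(p)<0$, giving $f<\frac{r^2}{4}$ on both sides of $N$ along the $x^1$-axis and contradicting $f>\frac{r^2}{4}$ on one component.

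The crux is the mixed subcase of (3): without exploiting the equation one could not rule out $f$ changing sign across a critical hypersurface, since the Hessian there could a priori be zero. The Bakry--Emery relation is precisely what pins down the sign of the one remaining (normal) second derivative and produces the contradiction; elsewhere the implicit function theorem does the routine work.
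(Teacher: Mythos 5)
Your proofs of (1) and (3) are essentially the same as the paper's: both exploit that at a critical point $p$ with $f(p)>\tfrac{n}{2}$ the drift Laplacian identity forces $\Delta f(p)<0$, hence some second derivative is nonzero, and the implicit function theorem then produces the codimension-one set containing $\cC$ near $p$. For (3), your case analysis (both sides $>$, both sides $\leq$, mixed) recovers the paper's separation argument; the only cosmetic imprecision is that on a component of $V\setminus N$ disjoint from $\cH_r$ one can only conclude $f\leq\tfrac{r^2}{4}$ (not strictly $<$), but your contradictions go through unchanged with that correction.

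The genuine gap is in (2) at the endpoint $c=\tfrac{n}{2}$. You write that this case ``admits a similar treatment,'' but it does not: at a critical point on $\{f=\tfrac{n}{2}\}$ one has $\Delta f(p)=\tfrac{n}{2}-f(p)=0$, so the trace argument no longer guarantees a nonzero Hessian entry and the entire Hessian can vanish. Part (1) is stated (and proved) only for $\cC\cap\{f>\tfrac{n}{2}\}$, so the implicit-function-theorem covering is simply unavailable at $c=\tfrac{n}{2}$. The paper handles this borderline case by appealing to the Han--Hardt--Lin measure estimate for singular sets of solutions of elliptic equations (\cite{HHL}), which is a substantively different, nonelementary input. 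You should either restrict (2) to $c>\tfrac{n}{2}$ or invoke a result of that type.
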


\vskip1mm
The nodal sets of eigenfunctions have a great deal of structure, but the value zero is special and many properties do not hold for non-zero values.  In fact, it is possible  to have a level set that is entirely critical, as  occurs at the local extrema for the radial eigenfunction $J_0 (|x|)$ on $\RR^2$ where $J_0$ is the Bessel function of the first kind.   However, by (3), this does not  occur for the subset $\cH_r$ of $\{ f= r\}$ that is the boundary of $\{ f > r \}$.

\begin{proof}[Proof of Lemma \ref{l:preparation}]
Note first that  $\cL\,f<0$ on $\{f>\frac{n}{2}\}$ and, thus,  $\Delta\,f<0$ on $\cC\cap \{f>\frac{n}{2}\}$.    Working in a neighborhood of a critical point we can therefore choose a coordinate system $\{x_i\}$ so that  $\partial_{x_1}^2f<-1$.  If $x\in \cC$,  then $\partial_{x_1}f(x)=0$ and thus by the implicit function theorem we can choose a new coordinate system in a neighborhood of $x$ so that in those coordinates $\{\partial_{x_1}f=0\}\subset \{y_1=0\}$ and so that $\partial_{x_1}$ is transverse to $\{y_1=0\}$.  We therefore have that (nearby) $\cC\subset \{\partial_{x_1}f=0\}\subset \{y_1=0\}$.     This gives (1). 

 For $c > \frac{n}{2}$, claim (2) follows from (1) since $\{ f = c \} \setminus \cC$ is a countable union of $(n-1)$-manifolds.  The borderline case $c = \frac{n}{2}$ in (2) follows from \cite{HHL}.
 
 We turn next to (3).  Note first that at $x=(x_1,\cdots,x_n)\in \cC$ if we let $h(s)=f(s,x_2,\cdots,x_n)$, then $h'(x_1)=0$ and $h''(x_1)<0$ so $h$ has a strict local maximum at $x_1$.  In particular, any neighborhood of any $x\in \cC\cap \{f>\frac{n}{2}\}$ intersects $\{f<f(x)\}$.  
Suppose now  that the conclusion (3) fails; so suppose that there exists $x\in \cH_r$ and a neighborhood $O$ so that $O\cap \cH_r\subset \cC$.  It follows that $O\cap \cH_r\subset \{y_1=0\}$.  Since $O\cap \cH_r$ separates the two non-empty sets $O\cap \{f>\frac{r^2}{4}\}$ and $O\cap \{\frac{r^2}{4}>f\}$ and $O\cap \cH_r$ is contained in $\{y_1=0\}$ it follows that $O\cap \cH_r=O\cap \{y_1=0\}$ and after possibly changing the orientation of $y_1$ we may assume that $O\cap \{y_1>0\}\subset \{f>\frac{r^2}{4}\}$ and $O\cap \{y_1<0\}\subset \{f<\frac{r^2}{4}\}$.     This, however, contradicts that at $x$ we have that $\partial_{x_1}^2f<0$ and $\partial_{x_1}$ is transverse to the level set $\{y_1=0\}$ so both $O\cap \{y_1>0\}$ and $O\cap \{y_1<0\}$ contains points where $f<f(x)=\frac{r^2}{4}$.  
\end{proof}

The functions $I(r)$, $D(r)$ and $U(r)$ may not be differentiable everywhere, but they will be absolutely continuous and differentiable a.e.
A function $Q(r)$ is  {\it{absolutely continuous}} on an interval $\cI$ if for every $\epsilon > 0$, there exists $\delta > 0$ so that if $\cup_{\alpha} (r_{\alpha} , R_{\alpha})$ is a finite disjoint union of intervals  in $\cI$ with $\sum (R_{\alpha} - r_{\alpha}) < \delta$, 
then we have $\sum \left| Q(R_{\alpha}) - Q(r_{\alpha}) \right| < \epsilon$.  
Absolutely continuous functions are precisely the ones where the  fundamental theorem of calculus holds (\cite{F}, page $165$): $Q$ is absolutely continuous if and only if 
  it is continuous, differentiable a.e., the derivative is in $L^1$, and  for every $r_1 < r_2$
\begin{align}	\label{e:AC}
	Q(r_2) - Q(r_1) = \int_{r_1}^{r_2} Q'(t) \, dt \, .
\end{align}
We will use   the following standard fact:
If $Q_1$ and $Q_2$ are absolutely continuous and $W:\RR^2 \to \RR$ is Lipschitz on the range of $(Q_1 , Q_2)$, then $W(Q_1 , Q_2)$ is  absolutely continuous.

\begin{Lem}	\label{l:coareaapp}
Suppose that $b$ is a proper $C^n$ function and $\cH^n (|\nabla b| = 0) = 0$ in $\{ b \geq r_0\}$ for some fixed $r_0$.  If $g$ is a bounded function and $Q(r) = \int_{r_0 < b < r} g$, 
then $Q$ is absolutely continuous and  $Q'(r) = \int_{b =r} \frac{g}{|\nabla b|}$ a.e.
\end{Lem}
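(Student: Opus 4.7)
The plan is to express $Q$ as an indefinite integral of the claimed derivative and then invoke the Lebesgue fundamental theorem of calculus; the main tool is the co-area formula for the Lipschitz function $b$.

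\textbf{Step 1 (co-area).} Define $\tilde g = g/|\nabla b|$ on $\{|\nabla b| > 0\}$ and $\tilde g = 0$ on $\{|\nabla b| = 0\}$. By the hypothesis $\cH^n(\{|\nabla b|=0\} \cap \{b \geq r_0\}) = 0$, we have $g = \tilde g \, |\nabla b|$ almost everywhere in $\{b > r_0\}$, so $Q(r) = \int_{r_0 < b < r} \tilde g \, |\nabla b|$. Federer's co-area formula applied to $b$ then gives
\begin{align*}
Q(r) = \int_{r_0}^{r} \left( \int_{b = t} \frac{g}{|\nabla b|} \, d\cH^{n-1} \right) dt .
\end{align*}

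\textbf{Step 2 (integrand in $L^1_{\mathrm{loc}}$).} Since $b$ is $C^n$, Sard's theorem ensures that almost every value $t > r_0$ is regular. For such $t$, $\{b = t\}$ is a smooth $(n-1)$-manifold, and properness of $b$ makes it compact, so $|\nabla b|$ attains a positive minimum there. Since $g$ is bounded, the inner integral $h(t) := \int_{b=t} g/|\nabla b| \, d\cH^{n-1}$ is finite for a.e.~$t$. Properness also gives $\Vol(\{r_0 < b < r\}) < \infty$, hence
\begin{align*}
\int_{r_0}^r |h(t)| \, dt \leq \|g\|_{\infty} \, \Vol(\{r_0 < b < r\}) < \infty ,
\end{align*}
so $h \in L^1_{\mathrm{loc}}([r_0, \infty))$.

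\textbf{Step 3 (fundamental theorem of calculus).} Any indefinite integral of an $L^1_{\mathrm{loc}}$ function is absolutely continuous on every bounded interval, with a.e.~derivative equal to the integrand (\cite{F}, p.~165). Combined with Step 1, this yields both absolute continuity of $Q$ and $Q'(r) = h(r) = \int_{b=r} g/|\nabla b|$ for a.e.~$r$.

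The only substantive ingredient is the co-area formula on a Riemannian manifold with a Lipschitz defining function: the null-measure assumption on $\{|\nabla b| = 0\} \cap \{b \geq r_0\}$ is precisely what legitimizes the formal division by $|\nabla b|$, while Sard's theorem together with properness of $b$ ensures that the level-set integrals are well-defined and finite for a.e.~$r$. I do not anticipate any genuine obstacle beyond these routine verifications.
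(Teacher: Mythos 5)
Your proof is correct but takes a more direct route than the paper's. The paper regularizes the integrand: it introduces the bounded functions $\frac{g\,|\nabla b|}{|\nabla b| + i^{-1}}$, applies the co-area formula to each of these (where it is uncontroversial because the integrand is bounded by $g$), and then passes to the limit with dominated convergence on the solid integrals $Q_i$ and monotone convergence on the level-set integrals $q_i$. You instead apply the co-area formula once, directly to the possibly unbounded nonnegative measurable function $\tilde g = g/|\nabla b|\,\chi_{\{|\nabla b|>0\}}$, and then argue $L^1_{\mathrm{loc}}$ finiteness of the level-set integrals via properness plus the volume bound. In effect, the paper's regularization is a proof of the strong-form co-area formula you are citing: both approaches land in the same place, yours being shorter because it assumes the co-area formula in its general (unsigned) form, while the paper's is more self-contained because it only invokes co-area for bounded integrands. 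One small point you should make explicit: the unsigned co-area formula you use requires $\tilde g \geq 0$ (or $\tilde g \in L^1$, which is not obvious a priori), so one should split $g$ into positive and negative parts, as the paper does at the outset; this is routine, and you already implicitly handle it in Step 2 where you bound $|h(t)|$ using $|g|$.
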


\begin{proof}
By separately considering the positive and negative parts of $g$, it suffices to assume that $g\geq 0$ is bounded.
Define a sequence of functions $Q_i$ by
\begin{align}
	Q_i (r) = \int_{r_0 < b < r}  \frac{g\, |\nabla b|}{|\nabla b| + i^{-1}} \, .
\end{align}
The functions $ \frac{g\, |\nabla b|}{|\nabla b| + i^{-1}}$ are bounded above by $g$ everywhere and converge to the bounded function $g$ a.e. (since  $\cH^n (|\nabla b| = 0) = 0$), so $\lim_{i \to \infty} \, Q_i (r) = Q(r)$ by
the dominated convergence theorem. 
Define functions $q_i(t)$ and $q(t)$ at regular values $t$ of $b$ by
\begin{align}
	q_i (t) = \int_{b=t}  \frac{g}{|\nabla b| + i^{-1}}  {\text{ and }}
	q(t)  = \int_{b=t}  \frac{g}{|\nabla b|} \, .
\end{align}
Since $b$ is $C^n$, Sard's theorem ($3.4.3$ in \cite{F}) gives that a.e.~$t$ is a regular value of $b$ and, thus, these
 functions are defined a.e.
The co-area formula (\cite{F}, page $243$) gives that
\begin{align}	\label{e:fromco}
	Q_i(r) = \int_0^r \, q_i (t) \, dt \, .
\end{align}
The sequence $q_i$ is monotonically increasing with $q_i \leq q_{i+1} \leq \dots < q$.
Moreover, $q_i$ converges to $q$ a.e.  The monotone convergence theorem gives that
\begin{align}
	\lim_{i\to \infty} \, \int_0^r \, q_i (t) \, dt = \int_0^r \, q(t) \, dt \, .
\end{align}
Combining this with \eqr{e:fromco} and $\lim_{i \to \infty} \, Q_i (r) = Q(r)$  gives the lemma.
\end{proof}

\subsection{Absolute continuity of $I$ and $D$}

In the remainder of this paper, we specialize to $f$ satisfying \eqr{e:aronson1} and \eqr{e:aronson2} and $b = 2\, \sqrt{f}$.  It follows that
   \begin{align}	\label{e:rho1}
 	|\nabla b|^2 &= 1 - \frac{4\, S}{b^2} \leq 1 \, , \\
	b \, \Delta\, b &=  n - |\nabla b|^2 - 2 \, S 
	\, .  \label{e:rho2}
 \end{align}
 Since $f$ is nonnegative and proper, then so is $b$ and, thus, the level sets of $b$ are compact.    Furthermore, Lemma 
 \ref{l:preparation} applies and, thus, so does Lemma \ref{l:coareaapp}.
 
  The definition \eqr{e:Iofr1} of $I(r)$ at regular values of $b$ will be extended continuously  to all values next.  To do this, choose a regular value $r_0 < 2\, \sqrt{2n}$ of $b$ and set
  \begin{align}
 	 I(r) 	 &=
 \int_{r_0 < b < r} b^{1-n} \, \left\{  \, \langle \nabla |u|^2 , \nabla b \rangle + \frac{|u|^2}{b^3} \, 2\, S \left( 2\,n - b^2	\right)
	\right\}
	+ \int_{b= r_0} |u|^2 \, |\nabla b|  \, . \label{e:Iofr1A}
 \end{align}
 The reason for stopping the integral at $b=r_0$ is that $b^{1-n}$ and $S\, b^{-2-n}$ might not be integrable in the interior if $\min b = 0$.

 \begin{Lem}	\label{l:IBDR}
  At regular values $r$ of $b$, 
the definitions \eqr{e:Iofr1} and \eqr{e:Iofr1A}  of $I(r)$ agree and 
   \begin{align}
   	D(r) &= \frac{r^{2-n}}{2} \, \int_{b = r} \langle \nabla |u|^2 , \frac{\nabla b}{|\nabla b|} \rangle \, . \label{e:firstclaim}
\end{align}
  \end{Lem}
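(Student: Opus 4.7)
The plan is to prove both identities by the divergence theorem on suitable sub-level sets of $b$, using that at a regular value $r$ the boundary $\{b=r\}$ is a smooth compact hypersurface with outward unit normal $\nabla b/|\nabla b|$.

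For \eqr{e:firstclaim}, the key pointwise identity is
\begin{align*}
|\nabla u|^2 + \langle \cL\,u,u\rangle \,=\, \tfrac{1}{2}\,\cL\,|u|^2,
\end{align*}
which follows from the tensor identities $\Delta |u|^2 = 2\,|\nabla u|^2 + 2\,\langle \Delta u,u\rangle$ and $\nabla_{\nabla f}|u|^2 = 2\,\langle\nabla_{\nabla f}u,u\rangle$. Since $\e^{-f}\,\cL\,\phi = \dv\bigl(\e^{-f}\,\nabla\phi\bigr)$ for any scalar $\phi$, applying the divergence theorem to $\e^{-f}\,\nabla|u|^2$ on $\{b<r\}$ yields
\begin{align*}
\int_{b<r}\bigl(|\nabla u|^2 + \langle \cL\,u,u\rangle\bigr)\,\e^{-f}
\,=\, \tfrac{1}{2}\int_{b=r} \e^{-f}\,\bigl\langle \nabla|u|^2,\,\tfrac{\nabla b}{|\nabla b|}\bigr\rangle.
\end{align*}
Because $f=r^2/4$ on $\{b=r\}$, the factor $\e^{-r^2/4}$ cancels the prefactor $\e^{r^2/4}$ in the definition of $D(r)$, giving \eqr{e:firstclaim}.

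For the equivalence of \eqr{e:Iofr1} and \eqr{e:Iofr1A}, I would apply the divergence theorem on $\{r_0<b<r\}$ to the vector field $X=b^{1-n}\,|u|^2\,\nabla b$. The boundary flux produces $r^{1-n}\int_{b=r}|u|^2\,|\nabla b|$ together with the initialization term at $b=r_0$, while the solid integral equals $\int_{r_0<b<r}\dv X$. Expanding
\begin{align*}
\dv X \,=\, b^{1-n}\,\langle \nabla|u|^2,\nabla b\rangle + |u|^2\,\bigl[(1-n)\,b^{-n}\,|\nabla b|^2 + b^{1-n}\,\Delta b\bigr],
\end{align*}
I would substitute $b\,\Delta b = n - |\nabla b|^2 - 2\,S$ from \eqr{e:rho2} and $|\nabla b|^2 = 1 - 4\,S/b^2$ from \eqr{e:rho1}; the bracket collapses to $\tfrac{2\,S(2n - b^2)}{b^{n+2}}$, matching the solid integrand of \eqr{e:Iofr1A} after pulling out $b^{1-n}$.

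The main obstacle is the algebraic cancellation in this last step: one must verify that $(1-n)\,|\nabla b|^2 + b\,\Delta b \,=\, n\,(1 - |\nabla b|^2) - 2\,S$ combines, via \eqr{e:rho1}, into exactly $\tfrac{2\,S(2n - b^2)}{b^2}$. Otherwise the argument is routine: the divergence theorem applies because $r$ and $r_0$ are regular values of the $C^n$ function $b$ and critical level sets have $\HH^n$-measure zero by Lemma \ref{l:preparation}, and $b$ stays bounded away from zero on the domain of integration so that $b^{1-n}$ is locally bounded.
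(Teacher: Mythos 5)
Your proposal is correct and follows essentially the same route as the paper: both identities are established by the divergence theorem, with the same vector field $b^{1-n}|u|^2\nabla b$ for the equivalence of the two definitions of $I$ and the same algebraic collapse of $(1-n)|\nabla b|^2 + b\Delta b$ to $\frac{2S(2n-b^2)}{b^2}$ via \eqr{e:rho1}--\eqr{e:rho2}. The only cosmetic differences are that you rewrite $D$ via the pointwise identity $\tfrac{1}{2}\cL|u|^2 = |\nabla u|^2 + \langle\cL u, u\rangle$ before integrating (the paper integrates $\dv(\nabla|u|^2\,\e^{-f})$ directly and identifies it with $\cL|u|^2\,\e^{-f}$, which is the same thing), and you carefully work on the annulus $\{r_0<b<r\}$ to sidestep the non-integrability of $b^{1-n}$ near $\min b = 0$ — a point the paper flags when introducing $r_0$ but then glosses over slightly by writing the divergence theorem on $\{b<r\}$.
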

  
  \begin{proof}
  To see that \eqr{e:Iofr1} and \eqr{e:Iofr1A}  agree at regular values, observe that
 the unit normal to the level set $b = r$ is given, at regular points, by $\nn = \frac{\nabla b}{|\nabla b|}$, so we can rewrite \eqr{e:Iofr1}
  \begin{align}
 	  r^{1-n} \, \int_{b=r} |u|^2 |\nabla b| &=  \int_{b < r} \dv \, (|u|^2 \, b^{1-n} \, \nabla b) \notag \\
	 &=
	\int_{b < r} b^{1-n} \, \left\{  \, \langle \nabla |u|^2 , \nabla b \rangle + |u|^2 \, \left( \Delta \, b - \frac{(n-1) \, |\nabla b|^2}{b}
	\right)
	\right\} \, .   \label {e:Iofr2} 
 \end{align}
  By \eqr{e:rho1} and \eqr{e:rho2}, we have that
 $b \, \Delta\, b =  n - |\nabla b|^2 - 2 \, S$ and $|\nabla b|^2 = 1 - \frac{4\, S}{b^2} $ and, thus,
 \begin{align}
 	b\, \left( \Delta \, b - \frac{(n-1) \, |\nabla b|^2}{b} \right) & = n\,(1-|\nabla b|^2) - 2 \, S 
	= \frac{2\, S}{b^2} \, (2\,n - b^2) \, .
 \end{align}
 Substituting this into \eqr{e:Iofr2} gives \eqr{e:Iofr1A}.
 The divergence theorem gives
  \begin{align}
\int_{b = r} \langle \nabla |u|^2 , \frac{\nabla b}{|\nabla b|} \rangle =  \e^{ \frac{r^2}{4}} \, \int_{b < r}
	\dv \, \left( \nabla |u|^2 \, \e^{-f} 
	\right) =  \e^{ \frac{r^2}{4}} \, \int_{b < r}
	   \cL \, |u|^2	   \, \e^{-f} 
	\, .
 \end{align}
    Multiplying this by $ 	\frac{r^{2-n}}{2} $ gives \eqr{e:firstclaim}.  
  \end{proof}

 \begin{Lem}	\label{l:Idiff}
  Both $I(r)$ and $D(r)$ are absolutely continuous with derivatives given a.e. by
   \begin{align}
 	 	 I'(r) &= 	r^{1-n} \, \int_{b = r}  \, \langle \nabla |u|^2 , \frac{\nabla b}{|\nabla b|} \rangle + \left( 2\,n\,r^{-2} - 1	\right)\,r^{1-n}\int_{b=r}\frac{2\, S \, |u|^2}{r\, |\nabla b|} \, ,  \label{e:Iofr3} \\
		 D'(r) &= \frac{2-n}{r} \, D + \frac{r}{2} \, D + \frac{r^{2-n}}{2} \, \int_{b = r}
	\frac{  \cL \, |u|^2 }{|\nabla b|} \, . \label{e:Dpri}
\end{align}
Where $I$ is positive $\log I$ is absolutely continuous and the derivative is given a.e. by
\begin{align}
	r\, (\log I)'(r) = 2\, U + (2\,n\,r^{-2}-1)\,\frac{2\, r^{1-n}}{I}  \int_{b = r} \frac{S\,|u|^2}{ |\nabla b|}  \, . \label{e:secondclaim}
\end{align}
  Furthermore,   $(\log I)' \leq 2 \, U/r$ a.e. when
 $r \geq \sqrt{2\,n}$. 
  \end{Lem}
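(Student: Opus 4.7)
The plan is to derive both derivative formulas from the coarea calculus via Lemma~\ref{l:coareaapp}, applied to sublevel-set representations. For $I$ I use the representation \eqr{e:Iofr1A} already available from Lemma~\ref{l:IBDR}. For $D$ I write $D(r)=r^{2-n}\,\e^{r^2/4}\,E(r)$ with $E(r):=\int_{b<r}(|\nabla u|^2+\langle\cL u,u\rangle)\,\e^{-f}$. Since $b$ is proper and $C^n$, Lemma~\ref{l:preparation} plus Sard's theorem guarantees that a.e.\ $r$ is a regular value with $\cH^n(\{b=r\})=0$, which is the hypothesis underlying the coarea derivative. Absolute continuity of $I$ and $E$ then follows by applying Lemma~\ref{l:coareaapp} on annular regions $\{r_0<b<R\}$.

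To obtain \eqr{e:Iofr3} I differentiate \eqr{e:Iofr1A}: each integrand $g$ contributes $\int_{b=r}g/|\nabla b|$ at regular $r$, and using $b\equiv r$ on $\{b=r\}$ the first integrand yields $r^{1-n}\int_{b=r}\langle\nabla|u|^2,\nabla b\rangle/|\nabla b|$ while the second yields the $S$-term, after simplifying $(2n-r^2)/r^3=(2n r^{-2}-1)/r$. For \eqr{e:Dpri} I apply the product rule to $D=r^{2-n}\e^{r^2/4}E$, producing the $\tfrac{2-n}{r}D+\tfrac{r}{2}D$ contribution; the remaining $r^{2-n}\e^{r^2/4}E'(r)$ is then computed using Lemma~\ref{l:coareaapp}, the drift-Bochner product rule $\tfrac{1}{2}\cL|u|^2=|\nabla u|^2+\langle\cL u,u\rangle$, and the observation that on $\{b=r\}$ one has $\e^{-f}=\e^{-r^2/4}$ (since $b=2\sqrt{f}$), so the $\e^{\pm r^2/4}$ factors cancel.

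For the $\log I$ formulas, absolute continuity on open intervals where $I>0$ follows from absolute continuity of $I$ composed with the locally Lipschitz function $\log$, as in the standard fact quoted after \eqr{e:AC}. Then \eqr{e:secondclaim} comes from $(\log I)'=I'/I$ applied to \eqr{e:Iofr3}, and recognizing the first term as $2U/r$ via the identity $D(r)=\tfrac{1}{2}\,r^{2-n}\int_{b=r}\langle\nabla|u|^2,\nabla b/|\nabla b|\rangle$ proved in Lemma~\ref{l:IBDR}. The final a.e.\ inequality $(\log I)'\leq 2U/r$ for $r\geq\sqrt{2n}$ reads off immediately from \eqr{e:secondclaim}: the correction term carries the factor $2n r^{-2}-1\leq 0$ multiplied by a manifestly nonnegative integrand ($S,|u|^2,|\nabla b|\geq 0$), hence is non-positive.

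The main obstacle is the strict hypothesis in Lemma~\ref{l:coareaapp} that $g$ be bounded, whereas the integrands for $I$ and $E$ (namely $\langle\nabla|u|^2,\nabla b\rangle$, $|u|^2 S$, $|\nabla u|^2$, $\langle\cL u,u\rangle$) are only $L^1_{\mathrm{loc}}$ in general. I expect to handle this by truncation on annuli $\{r_0<b<R\}$ and splitting into positive and negative parts, reducing to a routine monotone-convergence adaptation of the proof of Lemma~\ref{l:coareaapp}; this requires checking at a.e.\ regular $r$ that the level-set integrals on the right-hand side are finite, which is ensured by the local regularity implied by $u,\cL u\in L^2$.
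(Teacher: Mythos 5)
Your proposal is correct and follows essentially the same route as the paper: apply Lemma~\ref{l:coareaapp} to the solid-integral representations of $I$ (via \eqr{e:Iofr1A}) and $D$ (via the product rule on $r^{2-n}\e^{r^2/4}\int_{b<r}(\cdots)\e^{-f}$), then deduce \eqr{e:secondclaim} from \eqr{e:firstclaim} and the inequality from $S\geq 0$ together with $2nr^{-2}-1\leq 0$. You merely spell out the differentiation of $D$ and the cancellation of the $\e^{\pm r^2/4}$ factors, and flag the boundedness hypothesis of Lemma~\ref{l:coareaapp}, which the paper leaves implicit.
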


 \begin{proof}
  The continuity of $I(r)$ (as defined in  \eqr{e:Iofr1A}) and $D(r)$ follows from the dominated convergence theorem since $ \cH^n \left(\{ b= r \} \right) = 0 $ by
 (2) in  Lemma \ref{l:preparation}.  Furthermore, Lemma \ref{l:coareaapp} applies to both $I$ and $D$ and, thus, both are absolutely continuous   and 
  $I
'$ is given a.e. by  \eqr{e:Iofr3} and $D'$ is given a.e. by \eqr{e:Dpri}.  
Equation \eqr{e:secondclaim} follows from \eqr{e:firstclaim} and  \eqr{e:Iofr3}.  
  Since $S\geq 0$, we see that $ \left(\log I\right)'=\frac{I'}{I}\leq \frac{2\,U}{r}$
 for $r\geq \sqrt{2\,n}$.  
 \end{proof}

 \section{Positivity of $I(r)$}	\label{s:Ipo}

 The main result of this section is that $I(r) > 0$ when $r$ is sufficiently large:
 
  \begin{Pro}	\label{p:Ipositive}
 If $ u , \cL u  \in L^2$ and \eqr{e:herelambda} holds, then either
 \begin{itemize}
 \item[(A)] $I(r) > 0$ for every $r > 2 \, \sqrt{n+ 4\, \lambda}$, or
 \item[(B)] $u$ vanishes identically outside of a compact set.
 \end{itemize}
  \end{Pro}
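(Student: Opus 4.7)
The plan is to argue by contradiction: assume $I(r_0) = 0$ for some $r_0 > 2\sqrt{n+4\lambda}$ and show $u$ vanishes on $\{b > r_0\}$, giving alternative (B). The first move is to convert $I(r_0) = 0$ into a Dirichlet condition on the inner boundary. Interpreting the continuous extension of $I(r_0)$ as (up to the factor $r_0^{1-n}$) the weighted surface integral $\int_{b=r_0}|u|^2\,|\nabla b|$, vanishing of this nonnegative integrand forces $|u| = 0$ on the regular set $\cR_{r_0}$, where $|\nabla b| > 0$. By item (3) of Lemma \ref{l:preparation}, $\cR_{r_0}$ is dense in $\cH_{r_0} = \partial\{b > r_0\}$, so continuity of $u$ promotes this to $u \equiv 0$ on all of $\cH_{r_0}$.

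The core analytic ingredient is a weighted Hardy-type inequality on the exterior region $\{b > r_0\}$: for functions $\phi$ with $\phi|_{\cH_{r_0}} = 0$,
\[
\int_{b > r_0} |\nabla \phi|^2\,\e^{-f} \;\geq\; \frac{r_0^2 - 4n}{16}\int_{b > r_0} \phi^2\,\e^{-f}.
\]
I would prove this by noting that subtracting \eqref{e:aronson2} from \eqref{e:aronson1} yields $\cL\,f = \tfrac{n}{2} - f$, then testing $\phi^2$ against this identity and integrating by parts (the inner boundary contribution is killed by the Dirichlet condition), then applying Cauchy--Schwarz with weight $\eta = \tfrac{1}{2}$ together with the bound $|\nabla f|^2 \leq f$ from \eqref{e:aronson2}. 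The factor $(r_0^2-4n)/16$ emerges because $f > r_0^2/4$ on $\{b > r_0\}$. For tensor $u$ one applies the inequality to $\phi = |u|$ via Kato's inequality $|\nabla|u|| \leq |\nabla u|$, and a standard cutoff argument using $u\in L^2$ handles the behavior at infinity.

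The second ingredient is the IBP energy identity on an annulus $\{r_0 < b < r_k\}$, which, thanks to $u \equiv 0$ on $\cH_{r_0}$, simplifies to
\[
\int_{r_0 < b < r_k} \bigl(|\nabla u|^2 + \langle \cL\,u, u\rangle\bigr)\,\e^{-f} \;=\; \tfrac{1}{2}\int_{b=r_k} \nabla_{\nn}|u|^2\,\e^{-f}.
\]
Because $u, \cL\,u \in L^2$, the standard IBP bound gives $\int|\nabla u|^2\,\e^{-f} \leq \|u\|_{L^2}\|\cL\,u\|_{L^2} < \infty$; combining this with $|\nabla|u|^2| \leq 2|u|\,|\nabla u|$ and the co-area formula shows that $\int_{r_0}^\infty \bigl|\int_{b=r}\nabla_{\nn}|u|^2\,\e^{-f}\bigr|\,dr < \infty$, so there is a sequence of regular $r_k \to \infty$ along which the right-hand side tends to $0$.

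Passing to the limit along such $r_k$ and using the assumption $\langle \cL\,u, u\rangle \geq -\lambda|u|^2$ then yields
\[
\frac{r_0^2 - 4n}{16}\int_{b > r_0} |u|^2\,\e^{-f} \;\leq\; \int_{b > r_0} |\nabla u|^2\,\e^{-f} \;\leq\; \lambda\int_{b > r_0} |u|^2\,\e^{-f}.
\]
Since $r_0 > 2\sqrt{n+4\lambda}$ forces $(r_0^2-4n)/16 > \lambda$, the only way out is $\int_{b > r_0}|u|^2\,\e^{-f} = 0$, i.e.\ alternative (B). The main obstacle is making Step 1 rigorous when $r_0$ is not a regular value of $b$, so that $\{b = r_0\}$ need not be a smooth hypersurface and the surface integral must be interpreted carefully---a difficulty that is exactly resolved by the density statement in Lemma \ref{l:preparation}(3). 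A secondary technical point is the cutoff/exhaustion argument that legitimizes the Hardy inequality and the IBP identity out to infinity in the Gaussian-weighted space, using only that $u$ and $\cL\,u$ are in $L^2$.
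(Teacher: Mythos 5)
Your proof is correct and takes essentially the same route as the paper: your weighted Hardy-type inequality is precisely the paper's identity from $V = |u|\,\nabla f$ (testing $\cL f = \tfrac{n}{2} - f$ against $|u|^2$, then absorbing with weight $\tfrac12$ and using $|\nabla f|^2\leq f$), your energy identity is the paper's identity from $V = \nabla|u|$ combined with \eqr{e:herelambda} and Kato, and the two yield the identical threshold $r_0 > 2\sqrt{n+4\lambda}$. The one organizational difference is that the paper integrates by parts on a connected component $\Omega$ of $\{|u|>0\}$ contained in $\{b>r\}$ using the cutoffs $v_j = \eta_j(|u|)\,\chi_\Omega$, which are globally smooth and vanish near $\partial\Omega$ (and near $\cH_{r}$), thereby avoiding integration by parts across the possibly irregular inner boundary $\{b=r_0\}$ that your annulus argument instead handles via the density statement of Lemma~\ref{l:preparation}(3) together with the exhaustion at regular radii $r_k\to\infty$.
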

  
  An immediate consequence of (A) in Proposition \ref{p:Ipositive} is that 
 $U(r)$ is well-defined  and absolutely continuous for  $r > 2 \, \sqrt{n+ 4\, \lambda}$, and $U'$ is given a.e. by
 \begin{align}
 	U'(r) = \frac{D'}{I} - \frac{D \, I'}{I^2} \, .
 \end{align}

 The next elementary lemma shows that $|u| \in W^{1,2}$ and $|u| \, |\nabla f| \in L^2$ if $u , \cL \, u \in L^2$ (cf. \cite{CxZh2}, \cite{CM3}).

\begin{Lem}	\label{l:W12}
 If $ u , \, \cL \, u \in L^2$, then $|\nabla |u|| $, $|\nabla u|$, $|u| \, \sqrt{f}$, and  $|u| \, |\nabla f|$ are all in $L^2$.
\end{Lem}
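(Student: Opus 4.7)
The plan is a standard cutoff integration-by-parts argument, carried out in the right order so that the bounds bootstrap. Choose $\phi = \eta(b)$ where $\eta \in C^\infty_c([0,\infty))$ equals $1$ on $[0,r]$, vanishes on $[r+1,\infty)$ and $|\eta'|\le 2$; since $|\nabla b|\le 1$ by \eqref{e:rho1}, one has $|\nabla\phi|\le 2$ and $\phi$ has compact support because $b$ is proper. All integrations by parts below are done against this $\phi$ and the conclusions follow by sending $r\to\infty$ via monotone convergence.

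First I would bound $|\nabla u|$ in $L^2$. The pointwise identity
\begin{equation*}
\cL|u|^2 = 2\langle \cL u, u\rangle + 2|\nabla u|^2
\end{equation*}
combined with self-adjointness of $\cL$ on scalars gives
\begin{equation*}
2\int \phi^2 |\nabla u|^2 \e^{-f} = -2\int \phi^2 \langle \cL u, u\rangle \e^{-f} - 2\int \phi \langle \nabla\phi,\nabla|u|^2\rangle \e^{-f}.
\end{equation*}
Using Kato's inequality $|\nabla|u||\le|\nabla u|$ and $|\nabla|u|^2| \le 2|u||\nabla u|$, the last term is estimated by Cauchy--Schwarz with a small parameter $\epsilon$ and absorbed on the left. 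Bounding the remaining term by $\|u\|_{L^2}\|\cL u\|_{L^2}$ and letting $r\to\infty$ produces the uniform bound
\begin{equation*}
\int |\nabla u|^2 \e^{-f} \le C\left( \|u\|_{L^2}\|\cL u\|_{L^2} + \|u\|_{L^2}^2 \right),
\end{equation*}
which shows $|\nabla u|\in L^2$ and, by Kato, $|\nabla|u||\in L^2$.

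Next I would bound $|u|\sqrt{f}$ in $L^2$. Combining \eqref{e:aronson1} and \eqref{e:aronson2} one has the scalar identity $\cL f = \Delta f - |\nabla f|^2 = \tfrac{n}{2} - f$. Applying the scalar integration-by-parts formula $\int g\,\cL f\,\e^{-f} = -\int\langle \nabla g,\nabla f\rangle \e^{-f}$ with $g = \phi^2|u|^2$ yields
\begin{equation*}
\int \phi^2 |u|^2 f\,\e^{-f} = \frac{n}{2}\int \phi^2|u|^2 \e^{-f} + 2\int \phi |u|^2\langle \nabla\phi,\nabla f\rangle \e^{-f} + 2\int \phi^2 |u|\,\langle \nabla|u|,\nabla f\rangle \e^{-f}.
\end{equation*}
Since $|\nabla f|\le \sqrt{f}$ by \eqref{e:aronson2}, each of the last two terms is bounded using Cauchy--Schwarz with a small parameter: one piece of the form $\epsilon\int \phi^2 |u|^2 f\,\e^{-f}$ gets absorbed on the left, and the remaining pieces are controlled by $\|u\|_{L^2}^2$ and by $\|\nabla u\|_{L^2}^2$, which is finite by the previous step. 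Letting $r\to\infty$ gives $|u|\sqrt{f}\in L^2$, and then $|u||\nabla f|\in L^2$ follows immediately from $|\nabla f|^2 = f - S \le f$.

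The main subtle point is the order of operations: $|\nabla u|\in L^2$ must be established \emph{before} one can use the Cauchy--Schwarz estimate in the second step that mixes $|\nabla u|$ and $|u|\sqrt{f}$; otherwise the absorption argument gives no information. The only other point of care is verifying the pointwise Bochner-type identity $\cL|u|^2 = 2\langle \cL u,u\rangle + 2|\nabla u|^2$ for tensors, which follows from the analogous statement for $\Delta$ together with $\nabla_{\nabla f}|u|^2 = 2\langle \nabla_{\nabla f}u,u\rangle$, and checking that Kato's inequality is applicable (which it is, away from the zero set of $|u|$, with the usual density argument handling the rest).
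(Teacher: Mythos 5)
Your proof is correct and follows essentially the same route as the paper's: both establish $|\nabla u|\in L^2$ first via the identity $\cL|u|^2=2\langle\cL u,u\rangle+2|\nabla u|^2$ with a cutoff and absorption, then obtain $|u|\sqrt f\in L^2$ by integrating against $\nabla f$ (equivalently using $\cL f=\tfrac n2-f$) and absorbing, and finish with Kato and $|\nabla f|^2\le f$. The only cosmetic difference is your specific choice of cutoff $\phi=\eta(b)$ versus the paper's generic compactly supported $\eta$ with $|\eta|,|\nabla\eta|\le1$.
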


\begin{proof}
By the Kato inequality  and \eqr{e:aronson2},   $|\nabla |u|| \leq |\nabla u|$ and  $|\nabla f|^2 \leq f$. Thus,  it suffices to prove that $|\nabla u| , |u| \, \sqrt{f} \in L^2$.
We show first that $|\nabla u| \in L^2$.
Let $\eta$ be   a compactly supported function with $| \eta | , |\nabla \eta| \leq 1$.  Since
	$\cL \, |u|^2 = 2 \, |\nabla u|^2 +2 \, \langle u , \cL \, u \rangle$, 
applying the divergence theorem to $\eta^2 \, \nabla |u|^2 \, \e^{-f}$ gives 
\begin{align}
	 \int \eta^2 \, |\nabla u|^2 \,  \e^{-f} \leq  \| u \|_{L^2} \, \| \cL \, u \|_{L^2} +2 \, \int |\eta| \, |\nabla \eta|  \, |u| \, |\nabla |u|| \, \e^{-f} \, .
\end{align}
Using $|\nabla |u|| \leq |\nabla u|$ and the absorbing inequality 
$2 \, |\eta| \, |u| \, |\nabla u| \leq 2 \,  |u|^2 + \frac{1}{2} \, \eta^2 \, |\nabla u|^2$, we can absorb the $|\nabla |u||$ term and then apply the monotone convergence theorem for a sequence of $\eta$'s going to one everywhere gives that $|\nabla u| \in L^2$.
To see that $|u| \, \sqrt{f} \in L^2$, apply the divergence theorem to $\eta^2 \, |u|^2 \, \nabla f \, \e^{-f}$ and use that $\cL \, f = \frac{n}{2} - f$ to get
\begin{align}
	\int \eta^2 \, |u|^2 \, \left( f - \frac{n}{2} \right) \, \e^{-f} \leq 2 \, 
	\int \left\{ \eta^2 \, |u| \, |\nabla |u|| \, |\nabla f| + |\eta| \, |\nabla \eta| \, |u|^2 \, |\nabla f|
	\right\} \, \e^{-f} \, .
\end{align}
Using the bound $|\nabla f|^2 \leq f$, we can use absorbing inequalities on both terms on the right and then use that $|u|, |\nabla |u||$ are in $L^2$ to conclude that $|u| \, \sqrt{f} \in L^2$.
\end{proof}

    We will need a few preliminary results, including the following consequence of 
  Lemma \ref{l:preparation}:
 
\begin{Cor}     \label{c:preparation}
If $I(r)=0$ and \eqr{e:herelambda} holds, then $u\equiv 0$ on $\cH_r$.    
\end{Cor}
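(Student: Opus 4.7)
The plan is to first show that $u$ vanishes at every regular point $y\in\cR_r$, and then invoke continuity of $u$ together with the density of $\cR_r$ in $\cH_r$ from part (3) of Lemma \ref{l:preparation} to conclude $u\equiv 0$ on all of $\cH_r$. As a preliminary observation I will note that the hypothesis $I(r)=0$ in fact expresses that $r$ is a minimum of $I$: Lemma \ref{l:IBDR} gives $I(s)=s^{1-n}\int_{b=s}|u|^2|\nabla b|\geq 0$ at every regular value $s$, and since regular values are dense by Sard and $I$ is continuous by Lemma \ref{l:Idiff}, nonnegativity of $I$ propagates to all $r$.

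Next I will fix $y\in\cR_r$, so that $b(y)=r$ and $|\nabla b|(y)>0$, and argue by contradiction that $u(y)=0$. By the implicit function theorem there is a neighborhood $U$ of $y$ in which $\{b=s\}$ is a smooth hypersurface depending smoothly on $s$ for $s$ near $r$. If $u(y)\neq 0$, then after shrinking $U$ continuity gives constants $c_1,c_2,c_3>0$ with
\begin{align*}
|u|^2\geq c_1,\qquad |\nabla b|\geq c_2 \text{ on } U,\qquad \mathcal{H}^{n-1}\bigl(\{b=s\}\cap U\bigr)\geq c_3
\end{align*}
uniformly in $s$ near $r$. Choosing regular values $s_k\to r$ (again Sard) then forces
\begin{align*}
I(s_k) \;=\; s_k^{1-n}\int_{b=s_k}|u|^2|\nabla b| \;\geq\; s_k^{1-n}\,c_1c_2c_3,
\end{align*}
which is bounded below by a positive constant for large $k$, contradicting continuity of $I$ and $I(r)=0$. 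Hence $u\equiv 0$ on $\cR_r$, and density plus continuity finishes the argument.

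The main obstacle is conceptual rather than computational: when $r$ is a critical value of $b$, the set $\cH_r$ can be strictly smaller than $\{b=r\}$ and may contain critical points where the level-set picture degenerates, so one cannot directly read off $u\equiv 0$ from the identity $I(r)=r^{1-n}\int_{b=r}|u|^2|\nabla b|$ (which need not even hold). Lemma \ref{l:preparation}(3) is precisely the input needed to bypass this, guaranteeing that regular points remain dense in the potentially bad set $\cH_r$ so that the implicit function theorem still produces the smooth local picture on which the contradiction rests. The hypothesis \eqr{e:herelambda} does not enter the pointwise argument directly; it is invoked only in the background (along with $\cL u\in L^2$) to secure enough regularity of $u$ via standard elliptic theory so that the continuity step at the end is justified.
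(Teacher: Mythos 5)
Your proof is correct and takes essentially the same approach as the paper: both arguments hinge on Lemma \ref{l:preparation}(3) (density of $\cR_r$ in $\cH_r$), continuity of $u$, and the observation that a regular point $y$ with $|u|(y)>0$ forces $I(s)\geq\nu>0$ for nearby regular values $s$, contradicting $I(r)=0$ by continuity of $I$. The only cosmetic difference is that you first show $u\equiv 0$ on $\cR_r$ and then pass to $\cH_r$ by density, whereas the paper starts from an arbitrary $x\in\cH_r$ with $|u|(x)>0$ and uses density to produce the regular point $y$; these are logically interchangeable.
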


\begin{proof}
Suppose $x\in \cH_r$ with $|u|(x)>0$.  Since $u$ is continuous it follows from Lemma \ref{l:preparation} that there exists another point $y\in \cH_r\setminus \cC$ where $|u|(y)>0$.   Since $y$ is a regular point, then in a neighborhood of $y$ we have that $|u|\geq \frac{|u|(y)}{2}>0$, $|\nabla b|\geq \frac{|\nabla_y b|}{2}>0$.  It follows that there exists an $\nu>0$ such that  if $s$ be any regular value  sufficiently close to $r$, then the level set $b=s$ is a smooth hyper-surface and $I(s)\geq \nu>0$.    The claim follows.    
\end{proof}

 \begin{proof}[Proof of Proposition \ref{p:Ipositive}]
 Suppose that (A) fails and, thus, $I(r) = 0$ for some $r > 2 \, \sqrt{n+4\, \lambda}$.    
 By Corollary \ref{c:preparation},  we know that $|u| =0$ on $\cH_r = \partial \{ b > r \}$.      Assume (B) also fails and
 choose a connected component $\Omega $ of $\{ |u| > 0 \}$ with
 	$\Omega \subset \{ b > r \}$.  
This will lead to a contradiction.

 By Lemma \ref{l:W12}, $|u|$, $|u| \, |\nabla f|$, $|\nabla u|$ and $|\nabla |u||$ are all in $L^2$.
 For each $j$, let 
 $\eta_j: \RR \to [0, \infty)$ be a smooth function with $0 \leq \eta_j' \leq 4$ and
 \begin{align}
 	\eta_j (x) = 
	\begin{cases}
	x  & {\text{ for }} \frac{1}{j} \leq x \, , \\
	0 & {\text{ for }} x \leq \frac{1}{2j} \, .
	\end{cases}
 \end{align}
 Let $\chi$ be the characteristic function of $\Omega$, i.e, $\chi$ is one on $\Omega$ and zero otherwise, and 
  define  
  	$v_j = \eta_j (|u|) \, \chi_{\Omega} $.
Note that each $v_j$ is smooth on all of $M$ and $v_j \in W^{1,2}$ since $v$ is and $\eta_j$ is Lipschitz. 
Moreover, $v_j$ has support in $\{ b \geq r \}$ since $\Omega \subset \{ b > r \}$.

Let $V$ be a vector field with $V \in L^2$ and $v_j \, \left( \dv \, V - \langle V , \nabla f \rangle \right) \in L^1$.  Given $\eta$ with compact support and  $|\eta| , \, |\nabla \eta| \leq 1$, applying the divergence theorem to $ \eta \, v_j \, V \, \e^{-f} $ gives
\begin{align}	 
	\int \eta \,  \left( \langle \nabla v_j , V \rangle + v_j \, \left( \dv \, V - \langle V , \nabla f \rangle \right) \right) \, \e^{-f}  = - \int v_j \, \langle V , \nabla \eta \rangle \, \e^{-f}   \, .
\end{align}
Taking a sequence of $\eta$'s converging to one, the dominated convergence theorem gives
\begin{align}	\label{e:willdom}
	\int \left( \langle \nabla v_j , V \rangle + v_j \, \left( \dv \, V - \langle V , \nabla f \rangle \right) \right) \, \e^{-f} = 0 \, .
\end{align}
By the Lipschitz bound on $\eta_j$ and the Kato inequality,
 $|v_j| \leq 4 \, |u| $ and  $ |\nabla v_j| \leq   4 \, |\nabla u|$.
Furthermore, $v_j \to |u| \, \chi$ and $\nabla v_j \to \chi \, \nabla |u|$ a.e.~(since $\nabla |u| = 0$ a.e. on  $\{ |u| = 0\}$  by, e.g., lemma $7.7$ in \cite{GiTr}). Thus, applying the dominated convergence theorem to \eqr{e:willdom} gives
\begin{align}	\label{e:nowdom}
	\int_{\Omega} \left( \langle \nabla |u| , V \rangle + |u|  \, \left( \dv \, V - \langle V , \nabla f \rangle \right) \right) \,  \e^{-f} = 0 
	\, .
\end{align}
 First, we apply this with $V = \nabla |u|$ and then use \eqr{e:herelambda} and $|\nabla |u|| \leq |\nabla u|$ to get
\begin{align}		\label{e:combiii}
	0 =     \int_{\Omega} \left( |\nabla u|^2 + \langle u , \cL \, u \rangle \right) \,  \e^{-f}
	\geq    \int_{\Omega} \left( |\nabla |u||^2 - \lambda \, |u|^2 \right) \,  \e^{-f}
	\, .	 
\end{align}
 For the second application of \eqr{e:nowdom}, take $V = |u| \, \nabla f$ and use $\cL \, f = \frac{n}{2} - f$ to get
 \begin{align}
 	0 =  \int_{\Omega} \left\{ 2\,  \langle |u| \, \nabla |u| , \nabla f \rangle 
	+|u|^2 \, \cL \, f
	  \right\} \,  \e^{-f} =   \int_{\Omega} \left\{ 2\,  \langle |u| \, \nabla |u| , \nabla f \rangle 
	+|u|^2 \, \left(  \frac{n}{2} - f \right)	  \right\} \,  \e^{-f} \, . \notag
 \end{align}
Since $|\nabla f|^2 \leq f$, the absorbing inequality
$2\, \left| \langle |u| \, \nabla |u| , \nabla f \rangle \right| \leq 2 \, |\nabla |u||^2 + \frac{1}{2} \, |u|^2 \, |\nabla f|^2$ gives
 \begin{align}
 	\int_{\Omega} |u|^2  \,  \left(  f - n \right)  \, \e^{-f} \leq 
	4\, \int_{\Omega}  |\nabla |u||^2   \, \e^{-f} \leq 4\, \lambda \, \int_{\Omega}  |u|^2   \, \e^{-f}\, ,
 \end{align}
 where the last inequality is  \eqr{e:combiii}.
 Since $|u| > 0$ and $f = \frac{b^2}{4} > \frac{r^2}{4}$ on $\Omega$, we get that
 	 	$\left(  \frac{r^2}{4} - n - 4 \, \lambda \right)    \leq 0$.
 This is the desired contradiction since $r > 2 \, \sqrt{n+ 4\, \lambda}$.
 \end{proof}

 \section{Growth estimates}	\label{s:3}
 
 Throughout this section, we assume that $u$ satisfies \eqr{e:herelambda}.  We will use that, by Lemma \ref{l:Idiff}, $\log I$, $\log D$, and $\log U$ 
 are absolutely continuous as long as $I, D > 0$.  One challenge for controlling the growth of $D$ and $I$ is that $D'$ and $I'$ 
 have  terms involving $S$, with the wrong sign in one case and  a variable sign in the other.  The terms will be played off each other and we will be able to control  the right combination; this miraculous cancelation makes it work.

\begin{Pro}	\label{p:driftUmont}
If $r$ is a regular value of $b$ and $D(r) , I(r) > 0$, then 
\begin{align}
 	r\,(\log D)' &\geq 2-n  + \frac{r^2}{2}  + U -\frac{\lambda\,r^2}{U} 
	 -\frac{4\,\lambda}{U\,I} \, r^{1-n}\, \int_{b = r}
	\frac{S\,|u|^2}{|\nabla b|} 
	\, , \label{e:thederivofD2}\\
	r\, \left(\log U\right)' (r)&\geq    2-n+\frac{r^2}{2}-U-\frac{\lambda\,r^2}{U} +
	\left(1-\frac{2\,\lambda}{U}- \frac{2\,n}{r^2}\right) \, \frac{2\,r^{1-n}}{I}\int_{b=r}\frac{S\,|u|^2}{|\nabla b|}
\, .\label{e:logU}
\end{align}
\end{Pro}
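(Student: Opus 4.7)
The plan is to compute $r(\log D)'$ directly from Lemma \ref{l:Idiff} and then bound the boundary integral of $\cL|u|^2$ from below using a Cauchy--Schwarz inequality combined with the hypothesis \eqr{e:herelambda}. Once \eqr{e:thederivofD2} is established, \eqr{e:logU} will follow by subtracting the $r(\log I)'$ formula from Lemma \ref{l:Idiff}.

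First I would use the identity $\cL|u|^2 = 2|\nabla u|^2 + 2\langle u,\cL u\rangle$ together with the Kato inequality $|\nabla u|\geq |\nabla|u||$ and \eqr{e:herelambda} to write $\cL|u|^2 \geq 2|\nabla|u||^2 - 2\lambda|u|^2$ pointwise. The key step is then to relate $\int_{b=r}|\nabla|u||^2/|\nabla b|$ to $D$ and $I$. Since $r$ is regular and $D(r)>0$, Lemma \ref{l:IBDR} gives
\begin{equation*}
2\, r^{n-2}\, D = \int_{b=r}\langle\nabla|u|^2,\nabla b/|\nabla b|\rangle \leq 2\int_{b=r}|u|\,|\nabla|u||,
\end{equation*}
and then Cauchy--Schwarz yields
\begin{equation*}
\left(r^{n-2}D\right)^2 \leq \left(\int_{b=r}|u|^2|\nabla b|\right)\left(\int_{b=r}\frac{|\nabla|u||^2}{|\nabla b|}\right) = r^{n-1}\,I \cdot \int_{b=r}\frac{|\nabla|u||^2}{|\nabla b|},
\end{equation*}
so $\int_{b=r}|\nabla|u||^2/|\nabla b| \geq D^2\,r^{n-3}/I$.

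Next, to handle the $-2\lambda|u|^2$ term on the boundary, I would use the algebraic identity $|\nabla b|^{-1} = |\nabla b| + 4S/(b^2|\nabla b|)$, which follows from \eqr{e:rho1}, to split
\begin{equation*}
\int_{b=r}\frac{|u|^2}{|\nabla b|} = r^{n-1}I + \frac{4}{r^2}\int_{b=r}\frac{S\,|u|^2}{|\nabla b|}.
\end{equation*}
Substituting both bounds into $r(\log D)' = (2-n) + r^2/2 + \frac{r^{3-n}}{2D}\int_{b=r}\cL|u|^2/|\nabla b|$ (which comes from \eqr{e:Dpri}) produces exactly the four terms of \eqr{e:thederivofD2}, with the $-\lambda r^2 I/D = -\lambda r^2/U$ and $-4\lambda r^{1-n}/(UI)\int S|u|^2/|\nabla b|$ contributions falling out cleanly.

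Finally, for \eqr{e:logU}, I would write $r(\log U)' = r(\log D)' - r(\log I)'$ and insert the formula $r(\log I)' = 2U + (2n/r^2-1)\cdot 2r^{1-n}I^{-1}\int_{b=r}S|u|^2/|\nabla b|$ from \eqr{e:secondclaim}. The $+U$ from \eqr{e:thederivofD2} combines with the $-2U$ from $(\log I)'$ to give $-U$, and the two $S$-integral contributions combine with coefficient $2(1 - 2\lambda/U - 2n/r^2)$, matching \eqr{e:logU} exactly. The main obstacle is really the Cauchy--Schwarz step: it has to be set up so that the resulting denominator is $|\nabla b|$ (not $|\nabla b|^{-1}$) so that $I$ appears in the bound, and so that the leftover $|u|^2/|\nabla b|$ integral can be split via \eqr{e:rho1} into $I$ plus an $S$-weighted remainder — this is the ``miraculous cancellation'' the paper alluded to, and getting the bookkeeping right is the delicate part.
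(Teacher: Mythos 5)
Your proposal is correct and follows essentially the same route as the paper: both differentiate $\log D$ via Lemma \ref{l:Idiff}, bound $\cL|u|^2$ from below using \eqref{e:herelambda}, split $\int_{b=r}|u|^2/|\nabla b|$ into $r^{n-1}I$ plus an $S$-weighted term via \eqref{e:rho1}, apply the same weighted Cauchy--Schwarz to recover $U$ from the gradient integral, and subtract \eqref{e:secondclaim} to get \eqref{e:logU}. The only cosmetic difference is that you invoke Kato ($|\nabla|u||\leq|\nabla u|$) before the Cauchy--Schwarz step whereas the paper works with $|\nabla u|^2$ directly, but the estimates are equivalent.
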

 
 \begin{proof}
 Lemma \ref{l:Idiff} and \eqr{e:herelambda}  give
 \begin{align}
	D'(r) &= \frac{2-n}{r} \, D + \frac{r}{2} \, D + \frac{r^{2-n}}{2} \, \int_{b = r}
	\frac{  \cL \, |u|^2 }{|\nabla b|}  \geq 
	 \frac{2-n}{r} \, D + \frac{r}{2} \, D + r^{2-n} \, \int_{b = r}
	\frac{ \left( |\nabla u|^2 - \lambda \, |u|^2	\right)}{|\nabla b|} 
	\, . \notag
 \end{align}
  Since $4\, S= b^2 - b^2 \, |\nabla b|^2$, we get that
    \begin{align}
	r\,  (\log D)'(r) &\geq 2-n +\frac{r^2}{2}-\frac{\lambda\,r^2}{U}+\frac{r^{3-n}}{D}\,\int_{b=r}\frac{|\nabla u|^2}{|\nabla b |}-\frac{4\,\lambda\,r^{1-n}}{D}\int_{b=r} \frac{S\,|u|^2}{|\nabla b|}\, .  \label{e:thirdclaim}
 \end{align}
 Note that by the Cauchy-Schwarz inequality
 \begin{align}
 D^2(r)&=\left(\frac{r^{2-n}}{2} \, \int_{b =r} \langle \nabla |u|^2 , \frac{\nabla b}{|\nabla b|} \rangle\right)^2\leq    I(r) \, r^{3-n} \, \int_{b = r} \frac{| \nabla u|^2}{ {|\nabla b|}} \, .
 \end{align} 
Dividing this by $I(r)$ gives $U\,D\leq r^{3-n}\int_{b=r}\frac{|\nabla u|^2}{|\nabla b|}$.
Using this in \eqr{e:thirdclaim} gives \eqr{e:thederivofD2}.
Combining  \eqr{e:thederivofD2}  and \eqr{e:secondclaim} gives \eqr{e:logU}.  
 \end{proof}
 
 An immediate consequence of the proposition is the following:

\begin{Cor}	\label{c:driftUmont}
If $r$ is a regular value with $U(r)>2\,\lambda$ and $r>\sqrt{\frac{2\,n}{1-\frac{2\,\lambda}{U}}}$, then
\begin{align}
	\left(\log U\right)' &\geq    \frac{2-n-U}{r} +    r\,\left(\frac{1}{2}- \frac{\lambda}{U}\right)
\, .
\end{align}
\end{Cor}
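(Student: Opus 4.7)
The plan is that this corollary should follow directly from inequality \eqr{e:logU} in Proposition \ref{p:driftUmont} by discarding a nonnegative term. The entire content is showing that, under the two stated hypotheses, the coefficient $\left(1 - \frac{2\lambda}{U} - \frac{2n}{r^2}\right)$ multiplying the $S$-integral in \eqr{e:logU} is nonnegative, so that this term can be dropped to yield a clean lower bound for $(\log U)'$.

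First I would verify the sign. The hypothesis $U(r) > 2\lambda$ means $1 - \frac{2\lambda}{U} > 0$, so the hypothesis $r^2 > \frac{2n}{1 - \frac{2\lambda}{U}}$ is equivalent to $\frac{2n}{r^2} < 1 - \frac{2\lambda}{U}$, i.e., $1 - \frac{2\lambda}{U} - \frac{2n}{r^2} > 0$. Since $S \geq 0$, $|u|^2 \geq 0$, and $|\nabla b| > 0$ at regular points of $b$, the integral
\begin{equation*}
\frac{2r^{1-n}}{I} \int_{b=r} \frac{S\,|u|^2}{|\nabla b|}
\end{equation*}
is nonnegative (and well-defined, since $I(r) > 0$ at a regular value $r$ with $U(r) > 2\lambda$ defined). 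Thus the last term in \eqr{e:logU} is nonnegative and can be dropped.

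Second I would substitute: dropping this term in \eqr{e:logU} yields
\begin{equation*}
r\,(\log U)'(r) \geq 2 - n + \frac{r^2}{2} - U - \frac{\lambda\,r^2}{U},
\end{equation*}
and dividing through by $r > 0$ gives exactly the stated conclusion
\begin{equation*}
(\log U)'(r) \geq \frac{2-n-U}{r} + r\left(\frac{1}{2} - \frac{\lambda}{U}\right).
\end{equation*}

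There is no real obstacle, since Proposition \ref{p:driftUmont} does all the work; the only subtlety is bookkeeping the sign hypothesis to ensure the coefficient of the $S$-term is nonnegative. The condition $r > \sqrt{2n/(1 - 2\lambda/U)}$ is optimized precisely so this coefficient vanishes at equality and becomes positive past that threshold, which is what licenses discarding the term with its variable-sign integrand.
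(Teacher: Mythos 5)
Your proof is correct and matches the paper's intended argument (the paper simply calls the corollary ``an immediate consequence of the proposition''): under the two stated hypotheses the coefficient $1 - \frac{2\lambda}{U} - \frac{2n}{r^2}$ is positive, so the nonnegative $S$-term in \eqr{e:logU} can be dropped, and dividing by $r$ gives the result.
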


We use this to show that if $U$ goes strictly above $2\, \lambda$, then it grows quadratically; this does not assume that $u \in L^2$ and, indeed, it is impossible when $u , \cL \, u \in L^2$.
 
 \begin{Thm}	\label{c:Uquad}
 Given $\delta > 0$, there exists $R > \sqrt{2n}$ so that if 
  $U (r_0) > (2+\delta)\,\lambda$ for some $r_0 \geq R$, then $ U(r)\geq \frac{1}{2}\,r^2-r$ for every $r$ sufficiently large.
 \end{Thm}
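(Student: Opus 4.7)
The plan is to read Corollary~\ref{c:driftUmont} as a Riccati-type differential inequality whose stable equilibrium drives $U \to r^2/2$, and to run first-downcrossing comparison arguments against explicit barriers.

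\emph{Barrier 1: $U > (2+\delta)\lambda$.} At $U=(2+\delta)\lambda$ the factor $\tfrac{1}{2}-\tfrac{\lambda}{U}$ equals $\tfrac{\delta}{2(2+\delta)}$, so Corollary~\ref{c:driftUmont} gives
\[
(\log U)'(r) \;\geq\; \tfrac{r\delta}{2(2+\delta)} \;-\; \tfrac{n+(2+\delta)\lambda-2}{r},
\]
which is strictly positive once $r \geq R_1(n,\lambda,\delta)$. Since $U$ is absolutely continuous where $I>0$ by Lemma~\ref{l:Idiff}, a standard first-downcrossing argument rules out any $r \geq r_0 \geq R_1$ where $U$ drops back to $(2+\delta)\lambda$; hence $U > (2+\delta)\lambda$ on $[r_0,\infty)$.

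\emph{Growth and barrier 2: $U \geq r^2/2 - r$.} Multiplying the inequality of Corollary~\ref{c:driftUmont} by $U$ gives $U'(r) \geq \tfrac{U}{r}(\tfrac{r^2}{2} - U + 2 - n) - r\lambda$; absorbing $r\lambda \leq \tfrac{rU}{2+\delta}$ via Barrier~1 yields $U'(r) \geq \tfrac{U}{r}(ar^2 - U + 2-n)$ with $a:=\tfrac{\delta}{2(2+\delta)}$. While $U \leq \tfrac{a}{2}r^2$ and $r$ is large, this forces $(\log U)'(r) \geq ar/4$, so $U$ grows at least like $\exp(ar^2/8)$ and eventually exceeds $\tfrac{a}{2}r^2$. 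In the intermediate regime $U \in [\tfrac{a}{2}r^2, \tfrac{r^2}{2} - r]$ with $r \gg n$, the original inequality gives $U'(r) \geq U/2 - r\lambda \geq ar^2/8$---cubic growth forces $U$ to cross $r^2/2 - r$ at some finite $r_0'$. Setting $V(r) := U(r) - (r^2/2 - r)$, plugging $U = r^2/2 - r$ into the displayed inequality yields
\[
V'(r) \;\geq\; \tfrac{r^2}{2} - r\bigl(\tfrac{n}{2}+\lambda+1\bigr) + (n-1) \;\geq\; r^2/4
\]
for $r \geq R_3(n,\lambda)$. The same first-downcrossing argument, now applied to $V$, shows $V(r) \geq 0$ for every $r \geq r_0'$, which is the conclusion.

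The main obstacle is the careful implementation of the first-downcrossing comparisons: Corollary~\ref{c:driftUmont} asserts the derivative inequality only at regular values of $b$, while $U$ and $V$ are only absolutely continuous. The fix exploits density of regular values (Sard's theorem, as already used in Lemma~\ref{l:preparation}) together with absolute continuity of $V$: at any putative first downcrossing, one can find regular values arbitrarily close where the right-hand side of the derivative inequality is strictly positive, contradicting the downcrossing. The remaining calculations are routine once the ODE structure above is isolated.
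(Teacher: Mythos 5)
Your proof is correct and follows essentially the same three-stage barrier/ODE-comparison strategy as the paper: first show $U$ cannot drop back below $(2+\delta)\lambda$, then show $U$ is forced up to a quadratic level (you use $\tfrac{a}{2}r^2$ with $a = \tfrac{\delta}{2(2+\delta)}$, the paper uses $\tfrac{\delta}{5(2+\delta)}r^2$), and finally show the inequality forces exponential growth in the intermediate range until $U$ crosses and stays above $\tfrac{r^2}{2}-r$. Your treatment is a bit more explicit than the paper's in two places that are worth noting: you absorb $r\lambda$ into the $U$-term using Barrier 1 before the second stage (rather than carrying the $-\lambda r^2/U$ term), and you isolate $V = U - (\tfrac{r^2}{2}-r)$ and verify $V' \geq r^2/4$ at $V=0$ directly, where the paper only says this ``forces $U$ to grow exponentially to the top of this range.'' Your remark about the mismatch between ``regular values'' (where Corollary \ref{c:driftUmont} holds) and the absolute continuity of $U$ is also well-placed and correctly resolved.
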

 
 \begin{proof}
 If $U(r)> (2+\delta)\,\lambda$ for a regular value $r > \sqrt{ (4n\, (2+\delta)/\delta)}$, then Corollary \ref{c:driftUmont} gives
 \begin{align}
 (\log U)' (r)\geq \frac{2-n-U}{r}+\frac{\delta\,r}{2\,(2+\delta)}\, .
 \end{align}
 It follows that if $ (2+\delta)\,\lambda < U < \frac{\delta\,r^2}{5\,(2+\delta)}$ and $r > \sqrt{ (4n\, (2+\delta)/\delta)}$, then
\begin{align}
	 (\log U)' (r)> \frac{\delta\,r}{2\,(2+\delta)} - \frac{n}{r} - \frac{\delta\,r}{5\,(2+\delta)} > \frac{\delta \, r}{2+\delta} 
	 \, \left( \frac{1}{2} - \frac{1}{4} - \frac{1}{5}
	 \right) = 
	 \frac{\delta\,r}{20\,(2+\delta)} \, .
\end{align}
 This implies $U$ is increasing on this interval and that there exists an $R>0$ and $c>0$ such that  $ U(r)\geq c\,r^2$
 for $r>R$.
Thus,  by Corollary \ref{c:driftUmont}, if $ \frac{r^2}{2}-r>U$ for $r > R$, then
 \begin{align}
 (\log U)'\geq \frac{2-n}{r}+1-\frac{\lambda}{c\,r}\,  .
 \end{align}
This forces $U$ to grow exponentially to the top of this range, eventually giving the claim.   
 \end{proof}

\begin{proof}
(of Theorem \ref{t:main}).   Since $(\log I)' \leq 2 \, U/r$ for $r > \sqrt{2n}$ by Lemma \ref{l:Idiff}, 
the growth bound \eqr{e:IU08} will follow from the  bound \eqr{e:0p9} on $U$.
 We first show  for any $\delta > 0$ that
\begin{align}
U(r) \leq 2\,\lambda+\delta\label{e:weakerbound}
\end{align}
for all $r$ sufficiently large. We will argue by contradiction, 
so  suppose that  \eqr{e:weakerbound} fails for some $r$ sufficiently large.  
Theorem \ref{c:Uquad} gives that $U \geq \frac{r^2}{2} - r  $ for all sufficiently large $r$.  It follows that $K(r) = D(r) - 4\, \lambda \, I(r)$ is positive for all large $r$.  At a regular value $r > 2 \, \sqrt{n}$, Proposition \ref{p:driftUmont} and Lemma  \ref{l:Idiff} give
\begin{align}	\label{e:e313}
	r\, K'   &\geq \left( 2-n  + \frac{r^2}{2}  + U   - 8 \, \lambda \right) \, D - \lambda \, r^2 \, I
	 + \left[ 8\, (1 - \frac{2\,n}{r^2}) - 4 \right] \, \lambda \, r^{1-n}  \int_{b = r} \frac{S\,|u|^2}{ |\nabla b|} \notag \\
	&\geq  \left( 2-n  + r^2 - r  - 8 \, \lambda \right) \, D - \lambda \, r^2 \, I \\
	&\geq  \left( 2-n  + r^2 - r  - 8 \, \lambda \right) \, K +  4\, \lambda \, \left( 2-n  + \frac{3\,r^2}{4} - r  - 8 \, \lambda \right) \, I
	   \, . \notag 
\end{align}
 Thus, for $r$ large, we have $(\log K)' \geq \frac{3}{4} \, r$.  Integrating this gives 
 for $t>s>R$ 
\begin{align}	\label{e:inttocont}
	D(t)\geq K(t) \geq K(s) \,  \e^{ \frac{ 3\, (t^2-s^2)}{8}} \, .
\end{align}
This implies that 
\begin{align}
2\, \int_{b\leq t}(|\nabla u|^2 + \langle \cL \, u , u \rangle)\,\e^{-\frac{|x|^2}{4}} = \int_{b\leq t} \cL\, |u|^2
\,\e^{-\frac{|x|^2}{4}}&=2\, \e^{-\frac{t^2}{4}}\,t^{n-2}\,D(t) \to \infty  {\text{ as }} t \to \infty \, . \notag
\end{align}
This is a contradiction since $\cL \, u \in L^2$ and $u \in W^{1,2}$ by Lemma \ref{l:W12},  so   \eqr{e:weakerbound} holds.

We turn to the sharper bound \eqr{e:0p9}; we can assume that $\lambda > 0$ since otherwise $u$ is parallel since $u, \cL \, u \in L^2$.
The proof is by contradiction, so suppose that 
  $r \geq R$ satisfies
\begin{align}
2\,\lambda+\delta\geq U(r)\geq 2\,\lambda\,\left(1+\frac{\mu}{r^2}\right)\, , \label{e:therange}
\end{align}
where $\mu \in \RR$ will be chosen below.
 At any $r$ satisfying \eqr{e:therange}, we have 
\begin{align}
\frac{r^2}{2}-\frac{\lambda\,r^2}{U}=\frac{r^2}{2}\,\left(1-\frac{2\,\lambda}{U}\right)\geq \frac{\lambda\,\mu}{U} \geq 
 \frac{\lambda\,\mu}{2\,\lambda+\delta} \,  .
\end{align}
Together with \eqr{e:logU}, this gives at regular values that 
\begin{align}   \label{e:improvedlogU}
	r\, \left(\log U\right)' (r)
&\geq 2-n+\frac{\lambda \, \mu}{2\, \lambda + \delta} -2\,\lambda-\delta+
	\left( \frac{\lambda\,\mu}{2\,\lambda+\delta} -n\right)\, \frac{4\,r^{-1-n}}{I}\int_{b=r}\frac{S\,|u|^2}{|\nabla b|}\, .
\end{align}
Assuming that $\mu>\left( 2 + \frac{\delta}{\lambda} \right)\, n$ so the last term is nonnegative, we have
\begin{align}
r\, \left(\log U\right)' (r)&\geq \frac{ \lambda \, \mu - (2\, \lambda + \delta)^2 - (n-2) (2\, \lambda  + \delta)}{2\, \lambda + \delta} \, .
\end{align}
If   $\mu > 4 \, \lambda +2n - 4$, then this is strictly positive for $\delta > 0$ sufficiently small, forcing $U$ to 
  grow out of the range \eqr{e:therange}, giving the desired contradiction.
\end{proof}

\subsection{Examples}

We will next 
  consider  examples which show that Theorem \ref{t:main} is surprisingly sharp.  Not only is the threshold $2\lambda$ sharp, but even the next order term is sharp.  
If $u= b^2 - 2n$, then  $\cL \, u = - u$, so that $\lambda = 1$, and \eqr{e:firstclaim} gives
  \begin{align}
   	D(r) &= \frac{r^{2-n}}{2} \, \int_{b = r} \langle \nabla (b^2 -2n)^2 , \frac{\nabla b}{|\nabla b|} \rangle  =
	2\, r^{3-n} \, (r^2 - 2\,n) \,  \int_{b = r}     |\nabla b|  = \frac{2\, r^2 \, I(r)}{r^2-2n}  \, . 
\end{align}
Therefore, we see that the frequency $U= \frac{D}{I}$ satisfies
\begin{align}
	U(r) &=   \frac{2\, r^2}{r^2-2n} = 2\,  \left(1 + \frac{2\,n}{r^2} + O(r^{-4}) \right) = 2\, \lambda \, \left(1 + \frac{ 4\, \lambda + 2\, n -4}{r^2} + O(r^{-4}) \right) \, .
\end{align}
Next, let $M= \RR$, 
   $f = \frac{x^2}{4}$, and $\cL$ be the Ornstein-Uhlenbeck operator.
The degree $m$ Hermite polynomial has $\lambda = \frac{m}{2}$ and is given by
	$x^m - m \, (m-1) \, x^{m-2} + O(x^{m-4})$, so that
 \begin{align}
 	I(r) = 2 \, \left( r^{2\,m} - 2 \, m \, (m-1) \, r^{2\,(m-1)} + O(r^{2\,(m-2)}) \right) \, .
 \end{align}
 It follows that
 \begin{align}
 	2\, U(r) = \frac{r\, I'}{I} = 2\,m \, \frac{ r^{2\,m} - 2 \, (m-1)^2 \, r^{ 2\,(m-1)} + O(r^{2\,(m-2)})}{r^{2\,m} - 2 \, m \, (m-1) \, r^{2\,(m-1)} + O(r^{2\,(m-2)})} \, .
 \end{align}
 Thus, we have
	$U(r) = m \, \left( 1 + 2\, (m-1) \, r^{-2}
	+ O(r^{-4}) \right) = 2\, \lambda \,   \left( 1 +  (4\, \lambda - 2) \, r^{-2}
	+ O(r^{-4}) \right) $.

\section{Poisson equation}

Suppose that $u$ satisfies $\langle \cL \, u , u \rangle  \geq - \lambda \, |u|^2 - \psi$,  
where $\lambda \geq 0$ is a constant and $\psi \geq 0$ is a function.    By
  Lemma \ref{l:coareaapp},  $J$ from  \eqr{e:J} is absolutely continuous and $J'$ is given a.e. by
 \begin{align}   \label{e:J'}
 	J' = r^{2-n} \, \int_{b=r} \frac{\psi}{|\nabla b|} \, .
 \end{align}
  We will use the following immediate analog of  Proposition \ref{p:driftUmont} (with the additional term  in $D'$ (cf. \eqr{e:thirdclaim}), resulting in  $J'$ terms in \eqr{e:thederivofD2A}, \eqr{e:logUA}).

   \begin{Lem}	\label{l:2G}
   If $r$ is a regular value of $b$ and $D(r) , I(r) > 0$, then 
  \begin{align}
	r\,(\log D)' &\geq 2-n  + \frac{r^2}{2}  + U -\frac{\lambda\,r^2}{U} 
	 -\frac{4\,\lambda}{U\,I} \, r^{1-n}\, \int_{b = r}
	\frac{S\,|u|^2}{|\nabla b|} - \frac{r}{D} \, J'
	\, , \label{e:thederivofD2A}\\
	r\, \left(\log U\right)' &\geq    2-n+\frac{r^2}{2}-U-\frac{\lambda\,r^2}{U} +
	\left(1-\frac{2\,\lambda}{U}- \frac{2\,n}{r^2}\right) \, \frac{2\,r^{1-n}}{I}\int_{b=r}\frac{S\,|u|^2}{|\nabla b|}  - \frac{r}{D} \,J'
\, .\label{e:logUA}
 \end{align}
   \end{Lem}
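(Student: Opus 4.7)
The plan is to follow the proof of Proposition \ref{p:driftUmont} line by line, tracking the single new contribution coming from the $\psi$ term in \eqref{e:poisson}. Starting from the formula for $D'(r)$ in Lemma \ref{l:Idiff}, namely
\begin{align}
D'(r) = \frac{2-n}{r}\,D + \frac{r}{2}\,D + \frac{r^{2-n}}{2}\int_{b=r}\frac{\cL\,|u|^2}{|\nabla b|}\, ,
\end{align}
I use the Bochner identity $\cL\,|u|^2 = 2\,|\nabla u|^2 + 2\,\langle u,\cL\,u\rangle$ together with the Poisson hypothesis $\langle \cL\,u,u\rangle \geq -\lambda\,|u|^2 - \psi$. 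In the setting of Proposition \ref{p:driftUmont} the $\psi$ term was absent; here it contributes the extra term
\begin{align}
-\,r^{2-n}\int_{b=r}\frac{\psi}{|\nabla b|} = -\,r\,J'(r)
\end{align}
by the formula \eqref{e:J'} for $J'$. Dividing the resulting inequality for $D'$ by $D/r$ contributes exactly $-\,rJ'/D$ to $r(\log D)'$, which is the only new term in \eqref{e:thederivofD2A} compared to \eqref{e:thederivofD2}.

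The remainder of the derivation is unchanged. I use the identity $4S = b^2 - b^2|\nabla b|^2$ at $b=r$ to rewrite $\int_{b=r}\frac{|u|^2}{|\nabla b|}$ as $\frac{I(r)}{r^{1-n}} + \frac{4}{r^2}\int_{b=r}\frac{S\,|u|^2}{|\nabla b|}$, which produces the $-\lambda r^2/U$ and $-\frac{4\lambda}{UI}r^{1-n}\int_{b=r}S|u|^2/|\nabla b|$ terms. The Cauchy--Schwarz inequality
\begin{align}
D(r)^2 = \left(\frac{r^{2-n}}{2}\int_{b=r}\langle \nabla |u|^2,\tfrac{\nabla b}{|\nabla b|}\rangle\right)^2 \leq I(r)\,r^{3-n}\int_{b=r}\frac{|\nabla u|^2}{|\nabla b|}
\end{align}
yields $r^{3-n}\int_{b=r}|\nabla u|^2/|\nabla b| \geq U\,D$, producing the $+U$ term. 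Combining these three pieces with the new $-rJ'/D$ gives \eqref{e:thederivofD2A} exactly.

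For \eqref{e:logUA} I write $(\log U)' = (\log D)' - (\log I)'$ and substitute the formula for $r\,(\log I)'$ from \eqref{e:secondclaim}. The $+U$ from $(\log D)'$ combines with the $-2U$ from $-r(\log I)'$ to give the $-U$ on the right-hand side, and the $S|u|^2$ coefficients collapse to $\left(1-\frac{2\lambda}{U}-\frac{2n}{r^2}\right)\frac{2r^{1-n}}{I}$ after a short arithmetic check. The $-rJ'/D$ term is inherited unchanged from \eqref{e:thederivofD2A}, giving \eqref{e:logUA}.

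There is no real obstacle: once the $\psi$ term is tracked through the single step where $\langle u,\cL\,u\rangle$ is bounded below, every other manipulation is identical to the one in Proposition \ref{p:driftUmont}. The only thing to be careful about is the bookkeeping of the constant in $J'$, to make sure the extra term appears as $-rJ'/D$ rather than a spurious multiple of it, but this is immediate from $J'(r) = r^{2-n}\int_{b=r}\psi/|\nabla b|$.
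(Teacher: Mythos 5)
Your approach is exactly what the paper intends: the paper states Lemma~\ref{l:2G} as an ``immediate analog'' of Proposition~\ref{p:driftUmont}, obtained by tracking the single new $\psi$-term through that proof, and this is precisely what you do. The bookkeeping in the main steps (the Cauchy--Schwarz argument for the $+U$ term, the use of $4S = b^2 - b^2|\nabla b|^2$ to produce $-\lambda r^2/U$ and the $S$-integral term, and the subtraction of $r(\log I)'$ from \eqref{e:secondclaim} to pass from \eqref{e:thederivofD2A} to \eqref{e:logUA}) is all correct.

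One small error to flag: in your display
\[
-\,r^{2-n}\int_{b=r}\frac{\psi}{|\nabla b|} = -\,r\,J'(r)
\]
the right-hand side should be $-J'(r)$, not $-rJ'(r)$, since \eqref{e:J'} says $J' = r^{2-n}\int_{b=r}\psi/|\nabla b|$ exactly. Your next sentence (``dividing by $D/r$ contributes exactly $-rJ'/D$ to $r(\log D)'$'') is only consistent with the corrected version: a contribution of $-J'$ to $D'$ becomes $-rJ'/D$ after multiplying by $r/D$. As written, your two sentences contradict each other (one would give $-r^2J'/D$), so this is presumably a typo; the final inequalities \eqref{e:thederivofD2A} and \eqref{e:logUA} are stated correctly. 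Ironically this is exactly the ``bookkeeping of the constant in $J'$'' pitfall you warn about at the end.
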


 \begin{Lem}  \label{l:Knu}
 Given $\delta \in (0,2)$, set $K =D-(2\,\lambda + \delta/2) \,I$.  There exists $r_0(\lambda , \delta , n)$, so that if $r \geq r_0$ is a regular value with $K(r) > 0$, then
  \begin{align}
 r\,K '\geq \frac{2\, \lambda \,r^2}{4\, \lambda+\delta}\,K+\left[U+2-n+\frac{\delta\,r^2}{2\,(4\lambda + \delta)}-(4\, \lambda +\delta) \right]\,D-r\,J'\,  .
 \end{align}
 \end{Lem}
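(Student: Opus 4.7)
\textbf{Proof plan for Lemma \ref{l:Knu}.}

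The plan is to bound $r\,K'=r\,D'-(2\lambda+\delta/2)\,r\,I'$ from below by combining the differential inequality for $\log D$ in Lemma \ref{l:2G} with the identity for $I'$ from Lemma \ref{l:Idiff}, and then to recognize that after dropping a single manifestly nonnegative boundary term, the resulting lower bound is \emph{algebraically identical} to the claimed right-hand side.

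First, I write $A(r) := r^{1-n}\int_{b=r} S\,|u|^2 / |\nabla b|$, which is nonnegative. Multiplying \eqref{e:thederivofD2A} by $D=U\,I$ and using that $\frac{4\lambda D}{U\,I}=4\,\lambda$ gives
\begin{align*}
r\,D'\geq \Bigl(2-n+\tfrac{r^2}{2}+U-\tfrac{\lambda r^2}{U}\Bigr)D-4\,\lambda\,A-r\,J'.
\end{align*}
From \eqref{e:secondclaim} (or equivalently \eqref{e:Iofr3}) and $U\,I=D$, one obtains
\begin{align*}
r\,I'=2\,D-2\,A\,\bigl(1-\tfrac{2n}{r^2}\bigr).
\end{align*}
Subtracting $(2\lambda+\delta/2)$ times the second from the first, and using $2(2\lambda+\delta/2)=4\lambda+\delta$, yields
\begin{align*}
r\,K'\geq\Bigl[2-n+\tfrac{r^2}{2}+U-\tfrac{\lambda r^2}{U}-(4\lambda+\delta)\Bigr]D+\Bigl[\delta-\tfrac{2n(4\lambda+\delta)}{r^2}\Bigr]A-r\,J'.
\end{align*}

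Next, I choose $r_0=r_0(\lambda,\delta,n)$ so that $r_0^2\geq 2n(4\lambda+\delta)/\delta$; for $r\geq r_0$ the coefficient of $A$ is nonnegative, and since $A\geq 0$ we may drop that term. This is the only step where the threshold $r_0$ intervenes, and it is a clean one-line argument; no subtle analytic estimate is needed.

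Finally comes the algebraic matching, which is where the miracle happens. Expanding the claimed right-hand side of the lemma and using $K=D-(2\lambda+\delta/2)I$ together with the identity
\begin{align*}
\tfrac{2\lambda}{4\lambda+\delta}+\tfrac{\delta}{2(4\lambda+\delta)}=\tfrac{1}{2}\qquad\text{and}\qquad \tfrac{2\lambda\,(2\lambda+\delta/2)}{4\lambda+\delta}=\lambda,
\end{align*}
one computes that the claimed bound equals
\begin{align*}
\Bigl[\tfrac{r^2}{2}+U+2-n-(4\lambda+\delta)\Bigr]D-\lambda\,r^2\,I-r\,J'.
\end{align*}
Comparing with the inequality derived above (after discarding $A$), the only discrepancy is $-\tfrac{\lambda r^2}{U}D$ versus $-\lambda r^2 I$; but $U\,I=D$ makes these exactly equal, so the inequality and the claimed bound coincide. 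The main (and only) obstacle is the algebraic bookkeeping of the $r^2$–coefficients; once the weighting $2\lambda+\delta/2$ is chosen so that $2(2\lambda+\delta/2)=4\lambda+\delta$, all the pieces conspire to cancel, which is precisely the ``miraculous cancellation'' emphasized in the introduction to Section \ref{s:3}.
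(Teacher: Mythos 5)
Your proof is correct and takes essentially the same route as the paper: you combine the differential inequality \eqref{e:thederivofD2A} for $D'$ with the exact identity \eqref{e:secondclaim} for $I'$, choose $r_0$ so the surface term $A$ enters with a nonnegative coefficient $\delta-2n(4\lambda+\delta)r^{-2}$, and then verify algebraically that the remaining expression matches the claimed right-hand side. The only cosmetic difference is that the paper rewrites $\tfrac{r^2}{2}D-\lambda r^2 I=\tfrac{r^2}{2}(D-2\lambda I)$ and substitutes the identity $D-2\lambda I=\tfrac{4\lambda}{4\lambda+\delta}K+\tfrac{\delta}{4\lambda+\delta}D$ going forward, whereas you expand the claimed bound backward and check equality; these are the same computation.
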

 
 \begin{proof}
 By \eqr{e:thederivofD2A} and \eqr{e:secondclaim}, we have
  \begin{align}
  	r\, D' &\geq \left( 2-n  + \frac{r^2}{2}  + U\right) \, D - \lambda\,r^2 \, I 
	 -4\,\lambda \, r^{1-n}\, \int_{b = r}
	\frac{S\,|u|^2}{|\nabla b|} - r\, J' \, ,\\
 r\, (2\,\lambda + \delta/2)  \, I' &= (4\, \lambda + \delta)\, D + \left( \frac{2\,n}{r^2}-1 \right)\, (4\, \lambda + \delta) \, r^{1-n}  \int_{b = r} \frac{S\,|u|^2}{ |\nabla b|}  \, .
 \end{align}
 Since $S\geq 0$ and  $\left[(4\, \lambda + \delta)(1-2\,n\,r^{-2})-4\, \lambda \right]\geq 0$ for $r\geq r_0(\lambda , \delta , n)$,  it follows that 
 \begin{align}
 r\,K'& \geq \left[\left( 2-n  + \frac{r^2}{2}  + U\right)  -(4\, \lambda + \delta)\,\right]\,D-\lambda\,r^2 \, I-r\,J'\,  .
 \end{align}
Since  $[D-2\,\lambda\,I]=\frac{4\, \lambda}{4\, \lambda +\delta}\,K+\frac{\delta}{4\, \lambda +\delta}\,D$, this gives the claim.  
  \end{proof}
  
     \begin{proof}[Proof of Theorem \ref{t:main2A}]
      Set $J_0 = \sup \, J$.
 We will show that 
 \begin{align}	\label{e:showbycdA}
 	K(r)  \leq  10 \, J_0 {\text{ for all }} r>R(\lambda,\delta,n) \, .
 \end{align}
Once we have \eqr{e:showbycdA}, we use   \eqr{e:secondclaim} to get that  
\begin{align}
	r\, I' &\leq 2\,D  \leq (4 \, \lambda + \delta) \, I +  20\,J_0  \, .
\end{align}
Equivalently, 
$\left(r^{-(4 \, \lambda +  \delta)}\,I\right)'\leq 20\,r^{-(4 \, \lambda +  \delta)-1}\,J_0$.  Integrating this gives
\eqr{e:main2A}.  

We will prove \eqr{e:showbycdA} by contradiction, so suppose instead that $K(r_0) > 10 \, J_0$ for some large  $r_0$.  At any regular value $r$ with $K(r) > 0$, we have $D(r) > 0$,  thus, also
$I(r) > 0$ by Lemma \ref{l:IBDR} and $U(r) > 2\, \lambda+ \delta/2 > 0$.  Lemma \ref{l:Knu} then implies that if $r$ is large enough and $K > 0$, then $K' \geq - J'$.  Integrating this from $r_0$ gives that $K(r) \geq 9 \, J_0$ for all $r\geq r_0$ and, thus, also that $D, I > 0$ and 
$U > ( 2\, \lambda + \delta /2) > 0$.   In particular, \eqr{e:logUA}
gives
\begin{align}	\label{e:UplusJa}
	\left(\log U\right)' &\geq    \frac{2-n- U}{r} +\frac{r}{2}-\frac{\lambda\,r}{U}    - \frac{J'}{D} \geq
	 \frac{2-n- U}{r} +\frac{r}{2}-\frac{\lambda\,r}{U}    - \frac{J'}{9 \, J_0}
\,  .
\end{align}
  Suppose first $U(r) < \frac{\delta \, r^2}{4(4\, \lambda + \delta)}$ for every larger $r$, then \eqr{e:UplusJa} would give
\begin{align}	\label{e:UplusJb}
	\left(\log U\right)' &\geq 	 \frac{2-n}{r}-  \frac{\delta \, r}{4(4\, \lambda + \delta)} +\frac{r}{2}-\frac{2\,\lambda\,r}{4\, \lambda+ \delta}    - \frac{J'}{9 \, J_0} =  	 \frac{2-n}{r}-   \frac{J'}{9 \, J_0} +  \frac{\delta \, r}{4(4\, \lambda + \delta)}
\,  .
\end{align}
Integrating this contradicts the upper bound on $U$, so we conclude that   there is a large $r$ where $U \geq \frac{\delta \, r^2}{4(4\, \lambda + \delta)}$.   Next, at any large $r$ where 
	$\frac{\delta \, r^2}{8(4\, \lambda + \delta)}  \leq U(r) \leq \frac{r^2}{2} - r$, 
then \eqr{e:UplusJa} gives
\begin{align}	\label{e:UplusJc}
	\left(\log U\right)' &\geq    1 +
	 \frac{2-n}{r}  -\frac{8\, \lambda \, (4\, \lambda + \delta)}{\delta \, r}    - \frac{J'}{9 \, J_0}
\,  ,
\end{align}
 forcing $U$ to grow exponentially and, thus, eventually overtake the quadratic upper bound. Thus, we get $R_1$ large so that for all $ r \geq R_1$ we have $U > \frac{r^2}{2} -r  - \frac{1}{9}$  (the last term comes from integrating $\frac{J'}{9\, J_0}$).
   Using this lower bound for $U$ in Lemma \ref{l:Knu} gives
\begin{align}
( K+ J)' &\geq \frac{2\, \lambda \,r}{4\, \lambda +\delta}\,K+\left[ \left( \frac{r^2}{2} - r - \frac{1}{9} \right) +2-n+\frac{\delta\,r^2}{2\,(4\lambda+\delta)}-(4\, \lambda+ \delta) \right]\,\frac{K}{r} \\
&= \left(r - 1 + \frac{2-n-1/9 - (4\, \lambda + \delta)}{r} \right) \, K 
\geq \frac{8\, r}{9} \, K \geq \frac{4\, r}{5} \, (K+J) \, ,	\notag
 \end{align}
 where the last inequality used $K+J \leq K + J_0 \leq \frac{10}{9} \, K$.  Integrating gives that $K+J$ grows at least like $\e^{ \frac{2\, r^2}{5}}$.  This contradicts that $u \in W^{1,2}, \cL \, u \in L^2$ as in the proof of Theorem \ref{t:main}.
 \end{proof}

 We will also prove an effective growth bound similar in spirit to Hadamard's three circles theorem, \cite{Li}, \cite{N}.  Roughly, this shows that if $u$ is very small on a scale $r_1$ and bounded at larger scale $R$, then $u$ stays small out to  scale $R-1$.
  
 \begin{Pro}	\label{p:effective}
 Given  $ \lambda >0$ and $\delta \in (0,2\, \lambda)$, there exists $r_0$ so that if $r_0 \leq r_1 < R$, $u$ satisfies \eqr{e:herelambda} on $\{ r_1 \leq b \leq R \}$ and  $D(R) \leq \e^{ \frac{2\, R - 1}{6}} \, I(r_1)$, then for all $r \in [r_1 , R-1]$ 
\begin{align}
	I(r) \leq \left( \frac{r}{r_1} \right)^{ 4\, \lambda +2\, \delta} \, \left[1+\frac{1}{(2\, \lambda+ \delta)}  \right]\, I(r_1) \, .
\end{align}
 \end{Pro}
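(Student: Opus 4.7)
The plan is to adapt the argument of Theorem \ref{t:main2A} to the local setting, using the quantitative upper bound on $D(R)$ in place of the global $W^{1,2}$ hypothesis. Since \eqr{e:herelambda} here holds only on the annulus $\{r_1 \leq b \leq R\}$ and $\psi \equiv 0$, the natural threshold is $U = 2\,\lambda + \delta$; I therefore define $K(r) := D(r) - (2\,\lambda + \delta)\,I(r)$ and aim to establish the pointwise bound $K(r) \leq I(r_1)$ for every $r \in [r_1, R-1]$. Once this is in place, $r\,I'(r) \leq 2\,D(r) \leq (4\,\lambda + 2\,\delta)\,I(r) + 2\,I(r_1)$ (the first inequality is Lemma \ref{l:Idiff} for $r \geq \sqrt{2n}$), and applying the integrating factor $r^{-(4\,\lambda + 2\,\delta)}$ and integrating from $r_1$ to $r$ produces the stated bound with the constant $1 + \frac{1}{2\,\lambda + \delta}$.

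The main step is a contradiction argument for the claim $K \leq I(r_1)$ on $[r_1, R-1]$. Suppose instead $K(r_*) > I(r_1)$ at some $r_* \in [r_1, R-1]$. Since $K > 0$ forces $U > 2\,\lambda + \delta$ (and so $D, I > 0$ once $r_0$ is large enough for the local analog of Proposition \ref{p:Ipositive} to apply), Lemma \ref{l:Knu} applied with $2\,\delta$ in place of $\delta$ and with $J' \equiv 0$ yields at regular $r \geq r_0(\lambda, \delta, n)$ where $K(r) > 0$
\begin{align*}
r\,K'(r) \geq \frac{2\,\lambda\,r^2}{4\,\lambda + 2\,\delta}\,K + \Bigl[U + 2 - n + \frac{\delta\,r^2}{4\,\lambda + 2\,\delta} - (4\,\lambda + 2\,\delta)\Bigr]\,D \, .
\end{align*}
Bounding the bracket below by $\frac{\delta\,r^2}{4\,\lambda + 2\,\delta} - (2\,\lambda + n - 2)$ via $U > 2\,\lambda + \delta$, and using $D \geq K$ on the positivity set (equivalent to $I \geq 0$), produces $r\,K'(r) \geq \bigl(\tfrac{r^2}{2} - C\bigr)\,K(r)$ for a constant $C = C(\lambda, n)$. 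In particular $(\log K)'(r) \geq \frac{r}{2} - \frac{C}{r}$ a.e., so $K$ is strictly increasing for $r$ large and the connected component of $\{K > 0\}$ containing $r_*$ extends all the way to $R$.

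Integrating this differential inequality from $r_*$ to $R$ gives $\log K(R) - \log K(r_*) \geq \frac{1}{4}(R^2 - r_*^2) - C\,\log(R/r_*)$. The assumption $r_* \leq R - 1$ provides $R^2 - r_*^2 \geq 2\,R - 1$, while the logarithmic loss is controlled by $\log(R/r_0)$, which is negligible compared to $R$ once $r_0$ is large. Consequently, for $r_0 = r_0(\lambda, \delta, n)$ sufficiently large, $K(R) > I(r_1)\,\e^{(2\,R-1)/6}$, contradicting $K(R) \leq D(R) \leq I(r_1)\,\e^{(2\,R-1)/6}$. This proves $K \leq I(r_1)$ on $[r_1, R-1]$, and the integration in the first paragraph completes the argument.

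The main obstacle is verifying that the effective quadratic rate $\tfrac{1}{4}$ in $\e^{(R^2 - r_*^2)/4}$ strictly exceeds the threshold $\tfrac{1}{6}$ from the hypothesis on $D(R)$. This margin is delicate: it requires combining both the $\frac{2\,\lambda\,r^2}{4\,\lambda + 2\,\delta}$ contribution from the first term of Lemma \ref{l:Knu} and the additional $\frac{\delta\,r^2}{4\,\lambda + 2\,\delta}$ coming from the bracket containing $U$, which together deliver precisely the coefficient $\frac{r}{2}$ of $K$. Any weaker exploitation of the two positive contributions would just miss $\tfrac{1}{6}$ and break the contradiction.
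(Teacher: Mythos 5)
Your proposal is correct and follows the paper's proof essentially verbatim: apply Lemma~\ref{l:Knu} with $2\delta$ in place of $\delta$ and $J\equiv 0$ to force exponential growth of $K$ on the set where it is positive, derive a contradiction with the hypothesis on $D(R)$, and then integrate $r\,I' \leq 2\,D \leq (4\lambda+2\delta)\,I + 2\,I(r_1)$. Two small points: the paper gets the cleaner estimate $(\log K)' \geq r/3$ by absorbing the bounded term into the $r^2/6$ slack for $r\geq r_0$ (so no logarithmic correction and no ``delicacy'' about matching $1/6$), and the implication $K(r)>0 \Rightarrow D(r)>0 \Rightarrow I(r)>0$ is a pointwise Cauchy--Schwarz consequence of Lemma~\ref{l:IBDR}, not a ``local analog of Proposition~\ref{p:Ipositive}'' --- that proposition genuinely needs $u\in L^2$ and has no purely local version applicable here.
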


 \begin{proof}
 By Lemma \ref{l:Knu} with $J = 0$,  if $r\geq r_0=r_0(\lambda,\delta,n)$ and $K(r) > 0$, then $K' \geq \frac{r}{3} \, K$ and, thus, 
 $\e^{-\frac{r^2}{6}}\,K(r)$ is monotone non-decreasing.  If    $r\in [r_1,R-1]$ with $K(r) > I(r_1)$, then   $D(r) > K(r) > 0$ and, thus, also
$I(r) > 0$ by Lemma \ref{l:IBDR}.  Moreover, 
\begin{align}
	 D(R) > K(R) \geq \e^{  \frac{R^2 - r^2}{6}} \, K(r) \geq \e^{ \frac{R^2 - r^2}{6}} \, I(r_1) \geq  \e^{ \frac{2\, R - 1}{6}} \,  I(r_1) \, .
\end{align}
This contradicts $D(R) \leq \e^{ \frac{2\, R - 1}{6}} \, I(r_1)$, so 
  $K(r) \leq I(r_1)$     for all $ r \in (r_1 , R - 1)$ and, thus,
   \begin{align}
   	D(r) = K(r) + (2\, \lambda +\delta)  \, I(r) \leq I(r_1)+ (2\, \lambda+\delta )  \, I(r) \, .
\end{align}
Combining this with the bound on $I'$ from  Lemma \ref{l:IBDR} gives 
\begin{align}
	\left( r^{-(4\, \lambda +2\, \delta) } \, I(r) \right)' \leq - (4\, \lambda +2\, \delta)\, r^{-(4\, \lambda+2\, \delta)-1} \, I + 2 \,  r^{-(4\, \lambda+2\, \delta)-1} \, D
	\leq 2\, r^{-(4\lambda +2\, \delta)-1} \, I (r_1)\, .  \notag
\end{align}
Integrating from $r_1$ to $r \leq R -1$ gives the claim.  
 \end{proof}

\end{document}